\newtheorem{thm}{Theorem}[section]
\newtheorem*{quest*}{Question}
\newtheorem{theorem}[thm]{Theorem}
\newtheorem{cor}[thm]{Corollary}
\newtheorem{prop}[thm]{Proposition}
\newtheorem{lemma}[thm]{Lemma}
\newtheorem{remark}[thm]{Remark}
\newtheorem{defn}[thm]{Definition}
\newtheorem*{theorem*}{Theorem}
\newcommand{\tors}{\operatorname{tors}}
\newcommand{\bba}{{\mathbb{A}}}
\newcommand{\bbc}{{\mathbb{C}}}
\newcommand{\bbp}{{\mathbb{P}}}
\newcommand{\bbq}{{\mathbb{Q}}}
\newcommand{\bbz}{{\mathbb{Z}}}
\newcommand{\cB}{\mathcal{B}}
\newcommand{\cF}{\mathcal{F}}
\newcommand{\cH}{\mathcal{H}}
\newcommand{\cP}{\mathcal{P}}
\newcommand{\scrE}{\mathscr{E}}
\newcommand{\scrU}{\mathscr{U}}
\newcommand{\fkR}{\mathfrak{R}}
\newcommand{\Gbar}{\overline{G}}
\newcommand{\rbar}{\overline{r}}
\newcommand{\Hbar}{\overline{H}}
\newcommand{\SL}[1]{\operatorname{SL}_{2}\left(\bbz/{#1}\bbz\right)}
\newcommand{\GL}[1]{\operatorname{GL}_{2}\left(\bbz/{#1}\bbz\right)}
\newcommand{\Gal}{\operatorname{Gal}}
\renewcommand{\Im}{\operatorname{Im}}
\newcommand{\Gm}{\mathbb{G}_{\operatorname{m}}}
\title[Density of elliptic curves  with prescribed torsion subgroups]{Density of elliptic curves over number fields with prescribed torsion subgroups}
\author{Bo-Hae Im}
\address{Department of Mathematical Sciences, KAIST, 291 Daehak-ro, Yuseong-gu, Daejeon, 34141, South Korea}
\email{bhim@kaist.ac.kr}
\thanks{Bo-Hae Im was supported by Basic Science Research Program through the National Research Foundation of Korea(NRF) grant funded by the Korea government(MSIT)(NRF-2023R1A2C1002385).}
\author{Hansol Kim}
\address{Institute of Mathematics, Academia Sinica, 6F, Astronomy-Mathematics Building, No. 1, Sec. 4, Roosevelt Road, Taipei 10617, Taiwan}
\email{jawlang@gate.sinica.edu.tw}
\thanks{Hansol Kim was supported by NSTC grant funded by Taiwan government (MST) (No.~113-2811-M-001-068).}
\date{\today}
\subjclass[2020]{Primary 11G05,  Secondary 14H52}
\keywords{elliptic curve, torsion subgroup}
\begin{document}

\maketitle

\begin{abstract}
Let $K$ be a number field. For positive integers $m$ and $n$ such that $m\mid n$, we define $\mathscr{S}_{m,n}$ as the set of elliptic curves $E/K$ defined over $K$ such that $E(K)_{\tors}\supseteq \mathscr{T}\cong \bbz/m\bbz\times \bbz/n\bbz$.  We prove that if  the genus of the modular curve $X_{1}(m,n)$ is $0$, then `almost all' $E\in \mathscr{S}_{m,n}$ satisfy that $E(K)_{\tors}= \mathscr{T}$, i.e., $E(K)_{\tors}$ is not larger than $\mathscr{T}$. In particular, when $m=n=1$, our result generalizes the works of Duke (\cite{Duke}) and Zywina (\cite{Zywina}) over number fields $K$ satisfying $K \cap \bbq^{\operatorname{cyc}} = \bbq$, extending their results to arbitrary number fields~$K$ for the trivial torsion subgroup.
\end{abstract}

\section{Introduction}\label{intro}

The group structure of an elliptic curve over various fields and its application have been actively studied as one of the main topic in the area of number theory. Especially, for an elliptic curve $E$ over a number field $K$, it is known by the Mordell-Weil theorem (\cite[VIII.Theorem~6.7]{Silverman}) that  the set of $K$-rational points $E(K)$ of $E$ called {\it the Mordell-Weil group} is a finitely generated abelian group, i.e., $E(K)\cong E(K)_{\tors}\times \bbz^r$, where  $E(K)_{\tors}$ is its  finite torsion subgroup and an integer  $r\geq 0$ is  called {\it the rank} of $E(K)$ which is the number of independent non-torsion points in $E(K)$.  But there hasn't been yet found an efficient algorithm to compute the rank $r$ of $E(K)$, and there have been only some partial results on  the finite abelian group structure or the order of $E(K)_{\tors}$.

If we introduce several known results on the abelian group structure of $E(K)_{\tors}$, Mazur \cite{Mazur}  has classified all realizable torsion subgroups of elliptic curves over~$\bbq$ as follows:

\begin{theorem}[\cite{Mazur}]\label{thm:org_Mazur}
For an elliptic curve $E/\bbq$ defined over $\bbq$,
	the torsion subgroup $E(\bbq)_{\tors}$ of $E(\bbq)$ is isomorphic to one of the following groups:
	\begin{itemize}
		\item $\bbz/m\bbz$ where $m\in \{1,2,3,4,5,6,7,8,9,10,12\}$; \text{or }
		\item $\bbz/2\bbz\times \bbz/2n\bbz$ where $n\in \{1,2,3,4\}$.
	\end{itemize}
	Conversely, each group listed above can be realizable as a torsion subgroup $E(\bbq)_{\tors}$ for infinitely many (non-isomorphic) elliptic curves $E$ defined over $\bbq$. \\
\end{theorem}

Mazur's result, Theorem~\ref{thm:org_Mazur}	implies the following, which is obvious but worth mentioning for our interest in this paper.
\begin{cor}\label{cor:maxQ}  
	Each of the maximal classes of the torsion subgroups $E(\bbq)_{\tors}$ of $E(\bbq)$ For elliptic curves $E$ over $\bbq$ is isomorphic to  one of the following groups:
	\begin{itemize}
		\item $\bbz/m\bbz$ where $m\in \{7,9,10,12\}$; \text{or }
		\item $\bbz/2\bbz\times \bbz/2n\bbz$ where $n\in \{3,4\}$.
	\end{itemize}
	Conversely, each group $\mathscr{T}$  listed above is realizable as a torsion subgroup of an elliptic curve defined over $\bbq$, and if there is an elliptic curve $E$ over $\bbq$ with $E(\bbq)_{\tors}\cong \mathscr{T}$, then for any finite  subgroup $\mathscr{T}'\subseteq \mathscr{T}$, there are infinitely many (non-isomorphic) elliptic curves $E'$ over $\bbq$ such that  $E'(\bbq)_{\tors}\cong\mathscr{T}'$.
\end{cor}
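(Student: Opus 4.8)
The plan is to read off both assertions from Mazur's theorem (Theorem~\ref{org_Mazur}) by analysing the finite partially ordered set consisting of the fifteen isomorphism classes on Mazur's list, ordered by the relation ``is isomorphic to a subgroup of''. The comparisons one needs are: $\bbz/m\bbz$ embeds in $\bbz/n\bbz$ iff $m\mid n$; $\bbz/m\bbz$ embeds in $\bbz/2\bbz\times\bbz/2n\bbz$ iff $m\mid 2n$, since (because $2\mid 2n$) the orders of elements of $\bbz/2\bbz\times\bbz/2n\bbz$ are exactly the divisors of $2n$; $\bbz/2\bbz\times\bbz/2a\bbz$ embeds in $\bbz/2\bbz\times\bbz/2b\bbz$ iff $a\mid b$, by comparing invariant factors; and a noncyclic group never embeds in a cyclic one. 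Running through the list with these rules, one finds that each of $\bbz/1\bbz,\ldots,\bbz/6\bbz,\bbz/8\bbz$ embeds in a strictly larger member --- for instance $\bbz/m\bbz\subset\bbz/12\bbz$ for $m\in\{1,2,3,4,6\}$, $\bbz/5\bbz\subset\bbz/10\bbz$, and $\bbz/8\bbz\subset\bbz/2\bbz\times\bbz/8\bbz$ --- and likewise $\bbz/2\bbz\times\bbz/2\bbz\subset\bbz/2\bbz\times\bbz/4\bbz\subset\bbz/2\bbz\times\bbz/8\bbz$; whereas none of $\bbz/7\bbz,\bbz/9\bbz,\bbz/10\bbz,\bbz/12\bbz,\bbz/2\bbz\times\bbz/6\bbz,\bbz/2\bbz\times\bbz/8\bbz$ embeds in any other member of the list (for example $\bbz/2\bbz\times\bbz/6\bbz$ does not embed in $\bbz/2\bbz\times\bbz/8\bbz$, as $6\nmid 8$). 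Hence these six are exactly the maximal classes, which is the first half of the statement, and the realizability of each of them is precisely the converse assertion of Theorem~\ref{org_Mazur}.

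For the remaining claim, the key observation is that \emph{every} subgroup of one of the six maximal groups $\mathscr{T}$ again appears on Mazur's list. The subgroups of $\bbz/7\bbz,\bbz/9\bbz,\bbz/10\bbz,\bbz/12\bbz$ are the cyclic groups of order dividing $7,9,10,12$, and all such orders lie in $\{1,2,3,4,5,6,7,9,10,12\}$; the subgroups of $\bbz/2\bbz\times\bbz/6\bbz\cong(\bbz/2\bbz)^{2}\times\bbz/3\bbz$ are, up to isomorphism, $\{0\},\bbz/2\bbz,(\bbz/2\bbz)^{2},\bbz/3\bbz,\bbz/6\bbz,\bbz/2\bbz\times\bbz/6\bbz$; and the subgroups of $\bbz/2\bbz\times\bbz/8\bbz$ are, up to isomorphism, $\{0\},\bbz/2\bbz,\bbz/4\bbz,\bbz/8\bbz,(\bbz/2\bbz)^{2},\bbz/2\bbz\times\bbz/4\bbz,\bbz/2\bbz\times\bbz/8\bbz$ --- and each of these groups occurs in Theorem~\ref{org_Mazur}. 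Therefore, if $E/\mathbb{Q}$ satisfies $E(\mathbb{Q})_{\tors}\cong\mathscr{T}$ (a hypothesis that is automatically fulfilled over $\mathbb{Q}$, but is written this way with later number-field analogues in mind) and $\mathscr{T}'\subseteq\mathscr{T}$ is any subgroup, then $\mathscr{T}'$ is one of the groups in Mazur's list, so the converse half of Theorem~\ref{org_Mazur} yields infinitely many pairwise non-isomorphic elliptic curves $E'/\mathbb{Q}$ with $E'(\mathbb{Q})_{\tors}\cong\mathscr{T}'$.

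The whole argument is bookkeeping, so there is essentially no obstacle; the only steps deserving a moment's care are the enumeration of the subgroups of $\bbz/2\bbz\times\bbz/8\bbz$ (one must exclude putative subgroups such as $\bbz/16\bbz$ or $(\bbz/4\bbz)^{2}$, which is immediate from considerations of order and $2$-rank) and the verification that $\bbz/2\bbz\times\bbz/6\bbz$ and $\bbz/12\bbz$ are not dominated on the list, which reduces to the non-divisibilities $6\nmid 8$ and $12\nmid 2n$ for $n\le 4$.
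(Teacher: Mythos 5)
Your proposal is correct. The paper offers no written proof — it simply notes the corollary is an obvious consequence of Mazur's theorem — and your argument is exactly the bookkeeping that remark suppresses: identify the maximal elements of Mazur's list under the embeddability order and check that every abstract subgroup of a maximal one again appears on the list, so the converse clause of Theorem~\ref{org_Mazur} applies.
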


It is natural to ask how many elliptic curves have each of possibly realizable group structures as their torsion subgroups over a given number field. In order to introduce the related questions in more detail, along with  related known results and our main result of this paper, we first give some notations and definitions which will be used throughout this paper:
\begin{itemize}
	\item $K$: a number field.
	\item $\scrU:=\{(s,t)\in \bba^{2}:4s^{3}+27t^{2}\ne 0\}$: a rational variety in $\bba^{2}$.
	\item $E_{s,t}$: an  elliptic curve whose affine equation is given by $ y^{2}=x^{3}+sx+t$ with coefficients~$s, t$ in the function field $K(\scrU)=K(s,t)$ of $\scrU$.
	\item $E_{A,B}$: the specialization of  $E_{s,t}$ at $\left(A,B\right)\in \bba^{2}(K)$. 
	\item ${E_{A,B}(K)}_{\tors}$: the torsion subgroup of the Mordell-Weil group $E_{A,B}(K)$ of $E_{A,B}$ over~$K$.
	\item $H$: the absolute height on $\bbp^{n}(K)$ (see \cite[VIII.5]{Silverman}).
	\item $\cH\left(A,B\right):=H(A^{3},B^{2})$, the naive height of the elliptic curve $E_{A,B}$.
	\item $N_{\mathcal{S}}(X)$: the number of points $P\in\mathcal{S}\subseteq \bbp^{n}(K)$ such that $H(P)\le X$ for a positive real number $X$.
\end{itemize}
Note that the specialization $E_{A,B}$ is an elliptic curve if and only if $\left(A,B\right)\in \scrU(K)$.
\begin{defn}\label{defn:almostall}  	For two subsets $\mathcal{T}$ and $\mathcal{S}$ of $\bbp^{n}(K)$ such that  $\mathcal{T}\subseteq \mathcal{S}$, we say that  almost all points $P\in \mathcal{S}$ belong to  $\mathcal{T}$, if $$
		\lim_{X\to \infty}\frac{N_{\mathcal{T}}(X)}{N_{\mathcal{S}}(X)}=1.
	$$
\end{defn}

In this paper, for any number field $K$, we are interested in  the density of elliptic curves over~$K$ whose torsion subgroups~over $K$ are isomorphic to each of realizable torsion subgroups over $K$, by counting such  elliptic curves up to height of the coefficients of their affine equations.
	
First, Harron and Snowden \cite{Harron_Snowden} have observed how often each torsion subgroup over $\bbq$ listed in Theorem~\ref{thm:org_Mazur} appears:

\begin{theorem}[\cite{Harron_Snowden}]\label{thm:old_Harron_Snowden}
	For each finite abelian group $\mathscr{T}$ listed in Theorem~\ref{thm:org_Mazur}, there is a constant $d_{\mathscr{T}} > 0$ such that 
		$$\frac{1}{d_{\mathscr{T}}}
		=\lim_{X\to \infty}
		\frac
		{\log \#\{\left(A,B\right)\in \scrU(\bbq):\cH\left(A,B\right)\le X, {E_{A,B}(\mathbb{ Q })}_{\tors}\cong \mathscr{T}\}}
		{\log \#\{\left(A,B\right)\in \scrU(\mathbb { Q }):\cH\left(A,B\right)\le X\}}.
	$$
	Moreover, we have that $d_{\mathscr{T}}<d_{\mathscr{T}'}$, for any finite abelian group $\mathscr{T}'$ listed in Theorem~\ref{thm:org_Mazur} 
	such that $\mathscr{T}\subsetneq \mathscr{T}'$
\end{theorem}

Harron and Snowden \cite{Harron_Snowden} have computed the exact value of $d_{\mathscr{T}}$ for each possible torsion subgroup $\mathscr{T}$ over $\bbq$ of an elliptic curve defined  over $\bbq$. On the other hand, if we take the asymptotic behavior of the density as $\mathscr{T}$ gets bigger and so  as $d_{\mathscr{T}}$ increases strictly, then Theorem~\ref{thm:old_Harron_Snowden} implies the following:

\begin{cor}\label{cor:Harron_Snowden}
	For any finite abelian group $\mathscr{T}$ listed in Theorem~\ref{thm:org_Mazur}, almost all elliptic curves over $\bbq$ containing $\mathscr{T}$ have torsion subgroup $\mathscr{T}$ over $ \mathbb{ Q } $ exactly, i.e., $$
		\lim_{X\to \infty}\frac{\#\{\left(A,B\right)\in \scrU(\bbq):\cH\left(A,B\right)\le X, {E_{A,B}(\mathbb{ Q })}_{\tors}\cong \mathscr{T}\}}{\#\{\left(A,B\right)\in \scrU(\mathbb { Q }):\cH\left(A,B\right)\le X, {E_{A,B}(\mathbb { Q })}_{\tors}\supseteq \mathscr{T}\}}=1.
	$$
\end{cor}

Motivated by Harron-Snowden's  result over $\bbq$ ~\cite{Harron_Snowden} and extending it over arbitrary number fields, we raise the following questions.

\begin{quest*} Let $K$ be a number field.
	\begin{enumerate}[\normalfont Q1]
		\item \label{que1}\hspace{-.15cm}. Assume that there exists an elliptic curve  over $K$ whose torsion subgroup over $K$ is isomorphic to a finite abelian group $\mathscr{T}$. Then for any finite abelian subgroup $\mathscr{T}'\subseteq \mathscr{T}$, is there an elliptic curve $E$ over $K$ such that  $E(K)_{\tors}\cong \mathscr{T}'$?
		\item \label{que2}\hspace{-.15cm}. What are the maximal isomorphic classes of $E(K)_{\tors}$ of an elliptic curve~$E$ over $K$ (as an analogue of Corollary~\ref{cor:maxQ})?
		\item \label{que3}\hspace{-.15cm}. Assume that there exists an elliptic curve  over $K$ with a finite abelian group $\mathscr{T}$ as its torsion subgroup over $K$. Are there infinitely many (non-isomorphic) elliptic curves~$E$ defined over $K$ such that $E(K)_{\tors} \cong \mathscr{T}$?
		\item \label{que4}\hspace{-.15cm}.  Assume that there exists an elliptic curve  over $K$ with a finite abelian group $\mathscr{T}$ as its torsion subgroup over $K$. Then, do almost all elliptic curves~$E$ over $K$ such that $E(K)_{\tors} \supseteq\mathscr{T}$ have its torsion subgroup $\mathscr{T}$ over $K$ exactly, i.e,. $E(K)_{\tors} \cong\mathscr{T}$? 
	\end{enumerate}
\end{quest*}

The questions \ref{que1}--\ref{que4} are still open for almost all number fields, even for quadratic extensions of $\bbq$. Note that if the answer to \ref{que4} is positive, then so is the answer to \ref{que1}. Moreover, if \ref{que4} has an affirmative answer, then we may consider only the maximal isomorphic classes among the torsion subgroups of elliptic curves defined over $K$ for \ref{que2} and~\ref{que3}.

We will introduce some partial answers to	the questions~\ref{que1}--\ref{que3} over the quadratic extension $K=\bbq(\sqrt{-1})$:
Kenku and Momose \cite{Kenku_Momose} have provided a list of isomorphic classes of groups to which every realizable torsion subgroup of an elliptic curve defined over $\bbq(\sqrt{-1})$  necessarily belongs, and  Najman \cite{Najman} has refined the list by finding some classes which are not realizable over  $\bbq(\sqrt{-1})$. 
Then, Zhao \cite[Theorem~2.3]{Zhao} has computed $d_{\mathscr{T}}$ in Theorem~\ref{thm:old_Harron_Snowden} over $\bbq(\sqrt{-1})$ for each torsion subgroup $\mathscr{T}$ belonging to Najman's refined list such that  $\mathscr{T}\ne 0, \bbz/2\bbz, \bbz/3\bbz$. 
We note that \cite[Theorem~2.3]{Zhao} shows that $d_{\bbz/13\bbz}=\infty$, 
and Najman \cite{Najman2} has proved that $\bbz/13\bbz$ would not be realizable as a torsion subgroup of any elliptic curve over $\bbq(\sqrt{-1})$. Moreover, Zhao \cite{Zhao} has given a partial answer to \ref{que4} over the quadratic extension~$\bbq(\sqrt{-1})$ by proving
that $d_{\mathscr{T}}$ increases strictly as $\mathscr{T}$ gets bigger.

Also, Trbovi{\` c} \cite{Antonela} has provided a list of possible isomorphic classes of the torsion groups of elliptic curves over  quadratic fields $\bbq(\sqrt{D})$ for square-free integers $2\le D<100$, but without specifying which groups are  actually realizable.

On the other hand, there are several partial answers to the  questions~\ref{que1}--\ref{que3} in terms of small degrees of a given number field $K$ over $\bbq$. In \cite{Gu21}, Gu{\v z}vi{\` c}	has given a good summary of the related results under various conditions. In particular, if we let $d=[K:\bbq]$, then  for~$d=2$, some weakened answers to \ref{que1}--\ref{que3} are given. In particular,  
Kenku-Momose  \cite{Kenku_Momose} and Kamienny \cite{Kamienny} have given possible isomorphic classes of the torsion subgroup of an elliptic curve defined over a quadratic number field.
For $3\leq d\leq 6$, the lists of all possible and realizable isomorphic classes $\mathscr{T}$ of groups such that there are infinitely many elliptic curves $E$ over some number fields $K$ of degree $d$ with $E(K)_{\tors}\cong \mathscr{T}$ are given \cite[Theorem~3.4]{J3} for $d=3$, \cite[Theorem~3.6]{J4} for $d=4$, and \cite[Theorem~1.1]{DS56} for $d=5, 6$. Moreover, they have proved that for each $d=3,4,5,6$, if an abelian group $\mathscr{T}$ is in their list, then so are the subgroups of $\mathscr{T}$. 

As a generalization of Theorem~\ref{thm:old_Harron_Snowden} to  an arbitrary number field $K$, Bruin and Najman~\cite{Bruin_Najman} considered elliptic curves with all level structures of the torsion subgroup~$G$ such that the corresponding modular curves over $K$ satisfy certain conditions, and counted how many such elliptic curves have the $G$-level structure  for each subgroup $G \subseteq \GL{N}$. More precisely, under the certain assumptions on the weights $\omega$ of the moduli stacks of such elliptic curves and their `reduced degrees' $e(G)$, they proved that there exists a constant $\delta_{G,K}$ depending on~$\omega$ and $e(G)$ such that \begin{equation} \label{BN_delta}
	 \lim_{X\to \infty}\frac{\log\#\{\left(A,B\right)\in\scrU(K):\cH\left(A,B\right)\le X, E_{A,B} \text{ admits $G$-level structure}\}}{\log X}=\dfrac{1}{\delta_{G,K}}.
 \end{equation}
	
Note that since this constant $\delta_{G,K}$ depends on the weights and the reduced degrees, it is not clear whether $\delta_{G,K}$ increases or not as $G$ gets bigger, so the answer to \ref{que4} over all number fields cannot be deduced directly by this result. We give more explanation about this in Remark~\ref{remark:oldresult}.

On the other hand, regarding \ref{que4}, if $\mathscr{T}$ is trivial, then Duke  gave an answer when  $K=\bbq$, and then Zywina  generalized Duke's result to a number field $K$ such that $K \cap \bbq^{\operatorname{cyc}} = \bbq$, where $\bbq^{\operatorname{cyc}}$ is the cyclotomic extension of $\bbq$ as follows:

\begin{theorem}[{\cite[Corollary~1.2]{Duke}, \cite[Theorem~1.2]{Zywina}}] \label{thm:duke}  
	Let $K$ be a number field such that $K \cap \bbq^{\operatorname{cyc}} = \bbq$. Then, for almost all elliptic curves $E$ defined over $K$, $E(K)_{\tors}$ is trivial.
\end{theorem}

In this paper, we generalize Theorem~\ref{thm:duke}  for certain torsion subgroups over all number fields and give an answer to \ref{que4} in these cases. Before stating our main theorem, we give a brief introduction to a modular curve $X_{1}(m,n)$. The reason for introducing $X_{1}\left(m,n\right)$ is that our main result depends on its genus.

Let $\mathbb{H}^{*}=\{z\in \bbc: \Im z>0\}\cup \bbq \cup \{\infty\}$. For positive integers $m$ and $n$ such that $m\mid n$, the modular curve $X_{1}(m,n)$ is defined by  the quotient  $X_{1}(m,n)=\Gamma_{1}(m,n)\backslash\mathbb{H}^{*}$ of $\mathbb{H}^{*}$ by the action of the  congruence subgroup,
\begin{align*}
	\Gamma_{1}&(m,n)\\
	&:=\left\{\begin{pmatrix}a&b\\c&d\end{pmatrix}\in \operatorname{SL}_{2}(\bbz): a\equiv 1,  b\equiv 0 \pmod {m\bbz},\text{ and }
	c\equiv 0,d\equiv 1 \pmod {n\bbz}\right\}.
\end{align*}
We note that $X_{1}(m,n)$ is an algebraic curve defined over $\bbq(\zeta_{m})$ where $\zeta_{m}$ is a primitive $m$th roots of unity (see \cite[V.4.4]{DR}). We denote by $g_{m,n}$ the genus of $X_{1}(m,n)$.

We can refine the questions \ref{que3} and \ref{que4} a bit more depending on the genus $g_{m,n}$. If $g_{m,n}\ge 2$, Faltings' theorem (\cite[Satz~7]{Faltings}) implies that $X_{1}(m,n)(K)$ is finite for a number field~$K$. containing  $\bbq(\zeta_{m})$. Hence, \ref{que3} and \ref{que4} make sense only for a finite abelian group $\mathscr{T} \cong \bbz/m\bbz \times \bbz/n\bbz$ such that $g_{m,n}\le 1$. If $g_{m,n}=1$, then	$X_{1}(m,n)(K)$ can be finite or infinite depending on the given number field $K$ containing  $\bbq(\zeta_{m})$. If $g_{m,n}=0$, then $X_{1}(m,n)(K)$ is infinite if and only if it is not empty.  Moreover, the previous known results also reflect this phenomenon. For example, $\mathscr{T}\cong \bbz/m\bbz \times \bbz/n\bbz$ can be realized as a torsion subgroup of an elliptic curve over~$\bbq$  and $\bbq(\sqrt{-1})$ (see \cite{Mazur}, \cite{Najman} and \cite{Najman2}) only if $g_{m,n}=0$, considering the list of all $(m,n)$ such that $g_{m,n}=0$ given in \cite[Theorem~0.1]{J5}, and for such a group $\mathscr{T}$, the answer for \ref{que4} is positive for the number fields $\bbq$ and $\bbq(\sqrt{-1})$ (Corollary~\ref{cor:Harron_Snowden},\cite{Zhao}).

Therefore, our interest of this paper is focused on the cases of the torsion subgroups isomorphic to $\bbz/m\bbz\times \bbz/n\bbz$ when  the genus $g_{m,n}$ is $0$. 
If we let  \begin{equation}\label{eqn:genus0}
	T_{g=0} :=\left\{(m,n)\in \bbz^{>0}\times \bbz^{>0} : m\mid n, \text{ and } g_{m,n}=0\right\}, 
\end{equation} then by \cite[Theorem~0.1]{J5}, we can conclude that \begin{equation*}
	T_{g=0} =  \left\{(1,n_{1}),(2,n_{2}),(3,3),(3,6),(4,4),(5,5):n_1\in\{1,2,\ldots, 10, 12\}, n_{2}\in\{2,4,6,8\}\right\}.
\end{equation*}

Finally, we are ready to state	our main theorem:

\begin{theorem}\label{thm:main}
	Let $K$ be a number field. For each $(m,n)\in T_{g=0}$, if there exists an elliptic curve over $K$ whose torsion subgroup over $K$ contains a subgroup $\mathscr{T}  \cong\bbz/m\bbz\times \bbz/n\bbz$, then almost all elliptic curves $E$ defined over $K$  such that $E(K)_{\tors}\supseteq\mathscr{T}$ satisfy that $E(K)_{\tors}\cong\mathscr{T}$ exactly, i.e., $$
		\lim_{X\to \infty}\frac{\#\{\left(A,B\right)\in \scrU(K):\cH\left(A,B\right)\le X, {E_{A,B}(K)}_{\tors} \cong \mathscr{T}\}}{\#\{\left(A,B\right)\in \scrU(K):\cH\left(A,B\right)\le X,{E_{A,B}(K)}_{\tors} \supseteq \mathscr{T}\}}=1.
	$$
\end{theorem}

The following is an application of Theorem~\ref{thm:main}.

\begin{cor}\label{cor:easy_cor}
	Let $K$ be a number field. For each $(m,n)\in T_{g=0}$, there is an elliptic curve~$E_{A,B}$ over $K$ such that ${E_{A,B}(K)}_{\tors} \cong \bbz/ m \bbz \times \bbz / n \bbz$ if and only if $\zeta_{m}\in K$, where~$\zeta_{m}$ is a primitive $m$th root of unity.
\end{cor}

The main strategy to prove  Theorem~\ref{thm:main}, in particular, to prove that the desired torsion subgroup property in Theorem~\ref{thm:main} holds for `almost all' elliptic curves over a given number field $K$,  is to apply the Hilbert irreducibility theorem (HIT) (see Theorem~\ref{thm:HIT}) to some parameterized polynomials defining $N$-torsion points of an elliptic curve and its specializations.

If we describe it in more detail,
let $V$ be an absolutely irreducible variety defined over $K$ and $f(X)=X^{n}+\sum_{1\le i\le n}a_{i}X^{n-i}\in K(V)[X]$ be a monic polynomial over the function field $K(V)$ of $V$. For each $t\in V(K)$, let $f_t$ be the specialization of $f$ at $t$. Then,  the Galois groups $\mathcal{G}_{t}$ of $f_t$ over $K$ is a subgroup of the Galois group $\mathcal{G}$ of $f$ over $K(V)$ in general, and  HIT (Theorem~\ref{thm:HIT}) states that the Galois groups $\mathcal{G}_{t}$  is exactly the Galois group $\mathcal{G}$ for each $t\in V(K)$ outside a ``thin set'' (see Definition~\ref{defn:thinset} for its definition). 
On the other hand, \ref{que4} asks whether torsion subgroups  of almost all elliptic curves whose torsion subgroups contain a prescribed torsion subgroup $\mathscr{T}$ are exactly  $\mathscr{T}$ (i.e., no larger than $\mathscr{T}$),
and HIT (Theorem~\ref{thm:HIT}) can be applied to give a partial answer to \ref{que4} by regarding	`a thin set' in terms of  the Galois groups corresponding to the complement of a set of elliptic curves with the desired torsion subgroup property given in Theorem~\ref{thm:main}.

\begin{remark}\label{remark:oldresult}
	Most known results mentioned in the introduction (e.g., \cite{Duke}, \cite{Najman}, and \cite{Zhao}) hold only over $\bbq$ or certain number fields of bounded degrees. Moreover, they require  knowledge all classes of realizable torsion subgroups $\mathscr{T}$ and explicit computations of both quantities $d_\mathscr{T}$ and  $d_\mathscr{T'}$ (the reciprocals of densities as in Theorem~\ref{thm:old_Harron_Snowden})  of pairs of realizable torsion subgroups $\mathscr{T}\subsetneq \mathscr{T'}$. This is necessary to determine whether  $d_\mathscr{T}$ increases as $\mathscr{T}$ grows and consequently, to answer~\ref{que4}.
	However, the complete classification of realizable torsion subgroups~$\mathscr{T}$ over general number fields~$K$ remains unknown.

	In contrast, our approach does not require full knowledge of all realizable torsion subgroups. Instead, we focus on determining whether a finite abelian group $\mathscr{T}$ can be realized as a subgroup of the torsion subgroup of an elliptic curve over $K$, without needing to compute densities for all possible extensions $\mathscr{T}\subsetneq \mathscr{T}'$. This is made possible by employing a more Galois-theoretic approach. Regarding the method of Bruin and Najman \cite{Bruin_Najman}, they computed $\delta_{G,K}$ (as defined in \eqref{BN_delta}) when $g_{m,n}=0$, where $G= \Gamma_1(m,n)$ is viewed as a subgroup of $\GL{n}$. However, when $g_{m,n}=1$, even if an elliptic curve $E$ over $K$ exists such that ${E\left(K\right)}_{\tors}$ contains $\bbz/m\bbz\times \bbz/n\bbz$, their method may not allow for a computation of $\delta_{G,K}$ or a comparison of densities for such elliptic curves over $K$. Therefore, their approach does not directly lead to our main result, Theorem~\ref{thm:main}, whereas our method remains applicable in these cases.

	In our paper, in order to give an answer to \ref{que4} over arbitrary number fields $K$,  we only need to know the realization of a finite abelian group $\mathscr{T}$ as {\bf a subgroup of the torsion subgroup} of an elliptic curve over $K$, but there is no need to know either the realization of each $\mathscr{T'}$ and each $\mathscr{T}$ such that $\mathscr{T}\subsetneq \mathscr{T'}$, or compute the density of each of them, by applying more Galois group theoretic approach.

\end{remark}

The structure of our paper is as follows: 
In Section~\ref{sec:HIT}, we define a thin set and give the proof of the Hilbert irreducibility theorem described above.

In Section~\ref{sec:trans}, we interpret and transform the $N$-torsion subgroup problem into the Galois group problem of parameterized polynomials associated with the division polynomials (Definition~\ref{defn:primitive_division_polynomial}) of elliptic curves. We give several equivalent statements for the torsion subgroup of an elliptic curve to contain a prescribed subgroup, and sufficient conditions for it to be exactly the given subgroup in terms of the Galois group conditions, which is the conclusion in Theorem~\ref{thm:min_Gal}.

In Section~\ref{sec:main}, we  parameterize all elliptic curves over a given number field $K$ satisfying a necessary condition (Condition~$\cP(m,n)$ in \eqref{eqn:condP} of Section~\ref{sec:trans}) to have their torsion subgroups contain $\bbz/m\bbz \times \bbz/n\bbz$ for each $(m,n)\in T_{g=0}$ in \eqref{eqn:genus0} and prove that these parameterizations satisfy the Galois group conditions of Theorem~\ref{thm:min_Gal}. The genus $0$ condition ($g_{m,n}=0$) of $X_1(m,n)$ is crucial to  parameterize such elliptic curves, so we complete the proofs of Theorem~\ref{thm:main} and Corollary~\ref{cor:easy_cor}.

\section{Thin sets and Hilbert's Irreducibility Theorem}\label{sec:HIT}

First, we define a thin set. We refer to \cite[\S3.1]{Serre08} for more details.
\begin{defn}\label{defn:thinset}
		For an irreducible $K$-variety $V$, we define the family $\mathscr{F}$ of thin sets in $V(K)$ by the minimal family satisfying the following conditions:
	\begin{enumerate}[\normalfont(a)]
		\item For a $K$-morphism $\phi:W\to V$ where  $W$ is an irreducible $K$-variety such that $\dim W\le \dim V$, $\phi(W(K))\in \mathscr{F}$  is called {\it thin}, if one of the following statements is satisfied:
		\begin{enumerate}[\normalfont(i)]
			\item $\dim W<\dim V$.
			\item $\phi$ is geometrically surjective of $\deg \phi>1$.
		\end{enumerate}
		\item If $Z_1,Z_2\in\mathscr{F}$, then $Z_1\bigcup Z_2\in \mathscr{F}$.
		\item If $Z_1\in\mathscr{F}$ and  $Z_{2}$ is a subset of $Z_{1}$, then $Z_2\in \mathscr{F}$.
	\end{enumerate}
\end{defn}
In order to prove Theorem~\ref{thm:HIT}, our main result of this section, known as  the Hilbert irreducibility theorem, we need the following fact.

\begin{prop}\label{prop:thin_set}
	Let $\mathcal{S}\subseteq \bbp^{n}(K)$ be a thin set. Then, for almost all points $P\in \bbp^{n}(K)$, $P\notin \mathcal{S}$.
\end{prop}
\begin{proof} Referring to Definition~\ref{defn:almostall}, we show that 
	$$
		\lim_{X\to \infty}\frac{N_{\mathcal{S}}(X)}{N_{\bbp^n(K)}(X)}=0.
	$$
	This follows from \cite[\S13.1 Theorem~3]{Serre97} and \cite[\S2.5 Theorem(Schanuel)]{Serre97} which prove that for any $0<\varepsilon$, 
	$N_{\mathcal{S}}(X) = O\left(X^{(n+1/2)d+\varepsilon}\right)$ and $N_{\bbp^{n}(K)}(X) \approx c_{n,K}X^{(n+1)d}$ where $d=[K:\bbq]$ and $c_{n,K}$ is a positive constant depending on $n$ and $K$.
\end{proof}


Let $V$ be an absolutely irreducible variety defined over $K$ and $f(X)=X^{n}+\sum_{1\le i\le n}a_{i}X^{n-i}\in K(V)[X]$ be a monic polynomial over the function field $K(V)$ of $V$. We define the specialization $f_{t}$ of $f$ at $t\in V(K)$ by $f_{t}(X)=X^{n}+\sum_{1\le i\le n}a_{i}(t)X^{n-i}\in K(V)[X]$. We denote by~$\mathcal{G}$ and $\mathcal{G}_{t}$ the Galois groups of $f$ over $K(V)$ and of $f_{t}$ over $K$, respectively. In general, we have the inclusion $\mathcal{G}_{t}\subseteq \mathcal{G}$. Let $V_{f}:=\{(t,x)\in V\times\bba^{1}:f_{t}(x)=0\}$ and let $W$ be the Galois closure of the covering $V_{f} \to V$  defined by $(t,x)\mapsto t$. Then, the Galois group of the cover $W\to V$ is~$\mathcal{G}$.

\begin{prop}[{\cite[Proposition~3.3.5]{Serre08}}]\label{prop:old_HIT}
	Let  $V/K$ be an absolutely irreducible smooth variety defined over a number field $K$ with $\dim(V) \ge 1$ and $f$ be a monic polynomial in  $ (K[V])[x]$ where $K[V]$ is coordinate ring of $V$. Then, there is a thin set $S\subseteq V(K)$ such that any point $t\in V(K)-S$ satisfies that $\mathcal{G}_{t}=\mathcal{G}$.
\end{prop}

\begin{remark}
	In fact, \cite[Proposition~3.3.5]{Serre08} assumes the irreducibility of $f$ which leads to the irreducibility of the specializations $f_t$, but to prove our main theorem, we need neither the irreducibility of $f$  nor that of $f_t$ in Proposition~\ref{prop:old_HIT}. For more details, refer to \cite[§3.3]{Serre08} and \cite[Proposition~3.3.1]{Serre08} whose corollary is  \cite[Proposition~3.3.5]{Serre08}.
\end{remark}

Now, we are ready to prove the Hilbert irreducibility theorem (HIT) that we need. 

\begin{theorem}\label{thm:HIT}
	Let $V$ be a $K$-rational variety  of $\dim \ge 1$ and $f\in (K[V])[X]$ a monic polynomial  over the coordinate ring $K[V]$ of $V$. Then, for almost all $t\in V(K)$, we have that  $$\mathcal{G}_{t}=\mathcal{G}.$$
\end{theorem}
\begin{proof}
	This follows from Proposition~\ref{prop:thin_set} and Proposition~\ref{prop:old_HIT}.
\end{proof}

To apply Theorem~\ref{thm:HIT} to our main theorem, we need to reformulate the torsion subgroup problem in terms of the Galois group problem of polynomials associated with torsion points, which we present in the following section.

\section{$N$-Torsion subgroups and their associated Galois groups}\label{sec:trans}

For an elliptic curve $E/F$ defined over a field $F$ with characteristic $0$ and an integer $N\ge 1$, we denote by  ${E}(F)[N]$ and $E[N]$ the groups of $N$-torsion points defined over $F$ and over an algebraic closure  $\overline{F}$ of $F$, respectively. Recalling that ${E}(F)[N]\subseteq {E}[N]\cong \bbz/N\bbz\times \bbz/N\bbz$, we consider them as subgroups of $\bbz/N\bbz\times \bbz/N\bbz$. 

\begin{lemma}\label{lem:abgp} For any positive integer $N$, suppose $ \mathscr{A}$ is  an abelian group such that $ \mathscr{A}\cong \bbz/N\bbz\times \bbz/N\bbz$ and $\mathscr{B}$  is a subgroup of $\mathscr{A}$.  Then there exist positive divisors  $m$ and $n$ of $N$  such that $m \mid n $	and $\mathscr{B}\cong \bbz/m\bbz\times \bbz/n\bbz$. In particular, there exist $x, y\in \mathscr{A}$ such that  $\mathscr{A}=\langle x, y \rangle$  and $\mathscr{B}=\left\langle \frac{N}{m}x,\frac{N}{n}y\right\rangle$.
\end{lemma}
\begin{proof}
	We may assume that $\mathscr{B}\cong \bbz/m\bbz\times \bbz/n\bbz\subseteq \bbz/N\bbz\times \bbz/N\bbz$ for some positive divisors $m$ and $n$ of $N$ such that $m\mid n$. Then there exist $h,k\in \mathscr{B}$ with $|h|=m$ and $|k|=n$ such that $\mathscr{B}=\langle h,k\rangle$ and 
	$  \langle h \rangle\cap \langle k\rangle=\{0\}$, where $\left|g\right|$ denotes the order of  $g \in \mathscr{A}$. We can find  $y\in \mathscr{A}$ such that  $|y|=N$ and $k=\frac{N}{n}y$ as follows:  We let $\mathscr{C}:=\{\alpha\in \mathscr{A}: n\alpha=0\}\cong \bbz/n\bbz\times \bbz/n\bbz$,
	$$X:=\{\alpha\in \mathscr{A}:|\alpha|=N\}\text{, and } Y:=\{\alpha\in \mathscr{A}:|\alpha|=n\}=\{\alpha\in \mathscr{C}:|\alpha|=n\}.$$
	If we consider the function $f:X\to Y$ given by $\alpha\mapsto \frac{N}{n}\alpha$, then $f$ commutes with any  $\sigma \in \operatorname{Aut}(\mathscr{A})$, i.e., $f(\sigma(\alpha))=\sigma(f(\alpha))$ for all $\sigma \in \operatorname{Aut}(\mathscr{A})$ and all $\alpha\in X.$ Hence, ${\sigma(f^{-1}(\gamma))}=f^{-1}(\sigma(\gamma))$ for any  $\sigma \in \operatorname{Aut}(\mathscr{A})$ and $\gamma\in Y$. Since $\operatorname{Aut}(\mathscr{C})\cong \GL{n}$ acts on $Y$ transitively and the natural homomorphism $\operatorname{Aut}(\mathscr{A})\cong \GL{N}\to \GL{n}\cong \operatorname{Aut}(\mathscr{C})$ is surjective, $\operatorname{Aut}(\mathscr{A})$ also acts on $Y$ transitively. Therefore, $f^{-1}(\gamma)$ is not empty for any $\gamma\in \mathscr{A}$ such that $\mid \gamma \mid =n$. Hence there is $y\in \mathscr{A}$ of order $N$ such that $\frac{N}{n}y=k$. 
	Next, we choose $x\in\mathscr{A}$ such that $\mathscr{A}=\langle x,y \rangle$. Then $h=\frac{N}{m}(ax+by)$ for some integers $a$ and $b$.
	Then, we can show $\gcd(a,m)=1$. In fact, if we let  $d:=\gcd(a,m)$, then since  $d\mid m$, $m\mid n$, and $\left| x \right| = N$, $$
		\frac{m}{d}h=\frac{a}{d} (Nx)+\frac{m}{d}\left(\frac{N}{m}by\right)=\frac{n}{d}b\left(\frac{N}{n}y\right)\in \langle h\rangle \cap \langle k\rangle=\{0\},
	$$ so $m\mid \frac{m}{d}$, which implies that $d=1$.
	Therefore, there are two integers $s$ and $t$ such that $at+ms=1$. Then, $\frac{N}{m}x=\frac{N}{m}(at+ms)x=\frac{N}{m}atx\in \langle \frac{N}{m}ax\rangle\subseteq \langle \frac{N}{m}x\rangle$ and so $$
		\mathscr{B} =\left\langle \frac{N}{m}(ax+by), \frac{N}{n}y\right\rangle =\left\langle \frac{N}{m}ax, \frac{N}{n}y\right\rangle=\left\langle \frac{N}{m}x, \frac{N}{n}y\right\rangle.
	$$
\end{proof}

For any integer $N\ge 1$ and $\left(A,B\right)\in \scrU(K)$, the field $K(E_{A,B}[N])$, which is the field of definition of $E_{A,B}[N]$ over $K$, is Galois over $K$.
We denote its Galois group by $$ G_{N,A,B}:=\Gal(K(E_{A,B}[N])/K).$$ Then for an ordered basis $\cB=\{P,Q\}$ of $E_{A,B}[N]\cong\bbz/N\bbz\times \bbz/N\bbz$,  we have `the right action' of $G_{N,A,B}$ on  $E_{A,B}[N]$, i.e., if $\sigma, \tau \in G_{N,A,B}$, $P^\sigma = a_{\sigma} P+b_{\sigma} Q$ and $Q^\sigma = c_{\sigma} P+d_{\sigma} Q$, for some $a_{\sigma},b_{\sigma},c_{\sigma},d_{\sigma}\in \bbz/N\bbz$, and $(R^\sigma)^\tau=R^{\sigma\tau}$, for $R\in E_{A,B}[N]$. Hence,  by identifying $\GL{N}$ with $\text{Aut}(E_{A,B}[N])$, and identifying each $\sigma \in G_{N,A,B}$  with  $\begin{pmatrix}a_{\sigma}&b_{\sigma}\\c_{\sigma}&d_{\sigma}\end{pmatrix}\in \GL{N}$,  we define an injective group homomorphism $r_{\cB}:G_{N,A,B}\to \GL{N}$ under the right action by 
$$
	r_{\cB}\left(\sigma\right)=	\begin{pmatrix}a_{\sigma}&b_{\sigma}\\c_{\sigma}&d_{\sigma}\end{pmatrix}, \quad \text{ where }
	\begin {pmatrix} P^\sigma\\Q^\sigma\end{pmatrix} =\begin{pmatrix}a_{\sigma}&b_{\sigma}\\c_{\sigma}&d_{\sigma}\end{pmatrix} \begin{pmatrix}P\\Q\end{pmatrix}.
$$

\begin{lemma}\label{lem:demo}
	Let $N\geq 1$ be an integer, and  $m$ and $n$ be positive divisors  of $N$ such that $m \mid n$. Then, for $\left(A,B\right)\in \scrU(K)$, the following  are equivalent:
	\begin{enumerate}[\normalfont(a)]
		\item $ E_{A,B}(K)[N]$ contains $\bbz/m\bbz\times \bbz/n\bbz$.
		\item There is an (ordered)  basis $\cB$ of $E_{A,B}[N]$ such that $r_{\cB}\left(G_{N,A,B}\right)\subseteq H_{N}(m,n)$,
		where \begin{align*}
			H_{N}&(m,n)\\
			&:=\left\{\begin{pmatrix}a&b\\c&d\end{pmatrix}\in \GL{N}: a\equiv 1,  b\equiv 0 \pmod {m\bbz},\text{ and }
			c\equiv 0,d\equiv 1 \pmod {n\bbz}\right\}.
		\end{align*}
	\end{enumerate} 

\end{lemma}
\begin{proof}
	If $\bbz/m\bbz\times \bbz/n\bbz\subseteq E_{A,B}(K)[N]$, then by Lemma~\ref{lem:abgp}, there is a basis $\cB=\{P,Q\}$ of $E_{A,B}[N]$ such that $\tfrac{N}{m}P,\tfrac{N}{n}Q\in E_{A,B}(K)[N]$. The condition that $\tfrac{N}{m}P$ and $\tfrac{N}{n}Q$ are fixed under $G_{N,A,B}$  implies the congruence conditions for matrix entries in ~$H_N(m,n)$.
	
	The converse can be proved similarly.
\end{proof}

Next, we recall the definition of the division polynomials of an elliptic curve.

\begin{defn}[the  division polynomials, {\cite[III.Exercises~3.7]{Silverman}}]\label{defn:division_polynomial}
For a non-negative integer~$n$,	 the $n$th division polynomial $\psi_{n}$ of an elliptic curve $E_{s,t} : y^2=x^3+sx+t$ over $K(s,t)=K(\scrU)$  is defined inductively by
\begin{align*}
	&\psi_{0}=0,\qquad \psi_{1}=1,\qquad \psi_{2}=2y,\\
	&\psi_{3}=3x^{4}+6sx^{2}+12tx-s^{2}, \quad 	\psi_{4}=4y(x^{6}+5sx^{4}+20tx^{3}-5s^{2}x^{2}-4stx-(8t^{2}+s^{3})),\\
	&\psi_{2k-1}=\psi_{k+1}\psi_{k-1}^{3}-\psi_{k-2}\psi_{k}^{3}, \quad \text{ and }\quad  \psi_{2k}=\frac{\psi_{k}}{2y}\left(\psi_{k+2}\psi_{k-1}^{2}-\psi_{k-2}\psi_{k+1}^{2}\right),\quad \text{ for }k\ge 3.
	\end{align*}
\end{defn}

For each  integer $n\geq 1$, let \begin{align}\label{eqn:phi-omega}
	\phi_{n}=x\psi_{n}^{2}-\psi_{n+1}\psi_{n-1}, \quad \text{ and } \quad \omega_{n}=\frac{\psi_{n+2}\psi_{n-1}^{2}-\psi_{n-2}\psi_{n+1}^{2}}{4y}. 
\end{align} Then, for any point $P=(x,y)\in E_{s,t}$, $nP=\left(\dfrac{\phi_{n}(P)}{(\psi_{n}(P))^{2}},\dfrac{\omega_{n}(P)}{(\psi_{n}(P))^{3}}\right)$. 

In particular, $x\left(nP\right)=\dfrac{\phi_{n}(P)}{(\psi_{n}(P))^{2}}=\dfrac{\phi_{n}(x)}{(\psi_{n}(x))^{2}}$.

For any integer $N\geq 1$, we recall the following (for example, see \cite[III.Exercises~3.7]{Silverman}):

\begin{itemize}
	\item If $N$ is odd, then  $\frac{\psi_{N}}{N}\in (\bbq\left[s,t\right])[x]\subsetneq (K(\scrU))[x]$ is monic of  $\deg_{x}\frac{\psi_{N}}{N}=\frac{N^{2}-1}{2}$, and if $N\geq 3$ is odd, the zeros of $\frac{\psi_{N}}{N}$ are the $x$-coordinates of points in $E_{s,t}[N]-\{O\}$.
	\item If $N$ is even, then $\frac{\psi_{N}}{Ny}\in (\bbq\left[s,t\right])[x]\subsetneq (K(\scrU))[x]$ is monic of  $\deg_{x}\psi_{N}=\frac{N^{2}-4}{2}$, and if $N\geq 4$ is even, the zeros of $\frac{\psi_{N}}{Ny}$ are the $x$-coordinates of points in $E_{s,t}[N]-E_{s,t}[2]$.
\end{itemize}

Then, we construct the following polynomial whose solutions are exactly the $x$-coordinates of the points of order $N$ by using the M{\" o}bius function $\mu$.

\begin{defn}[the primitive division polynomials and their Galois groups]\label{defn:primitive_division_polynomial}
	For each integer $N\ge 1$, we define the~$N$th primitive division polynomial $\Psi_{N}$ as follows; 
	$$\Psi_{1}=1, \quad \text{ and } \quad \Psi_{2}=x^{3}+sx+t \in \bbq\left[s,t\right][x]\subseteq K(\scrU)[x],$$
	$$
		\Psi_{N}:=\prod_{d\mid N}\left(\frac{\psi_{d}}{d}\right)^{\mu(N/d)}\in \bbq\left[s,t\right][x]\subseteq K(\scrU)[x], \text{ for } N\geq 3,
	$$ where $\mu$ is the M{\" o}bius function.
	
	For each integer $N\ge1$, we denote by $\Gbar_{N}$ the Galois group of $\Psi_{N}$ for $E_{s,t}$ over $K(s,t)$.
	Let the polynomial $\Psi_{N,A,B}$ over $K$ be the specialization of $\Psi_{N}$ at $\left(A,B\right)\in \scrU(K)$. 
\end{defn}

First, we give some properties of the primitive division polynomials in Lemma~\ref{lem:psi_deg} and Remark~\ref{rmk:psi_N} below. For a positive integer $N$, we let \begin{equation}\label{eqn:delta}
	\delta_{N} := N^{2}\prod\limits_{\text{primes }p\mid N}(1-\frac{1}{p^{2}}),
\end{equation} as a more convenient expression to simplify later use.
\begin{lemma}\label{lem:psi_deg}
	For an integer $N\ge 3$, we have that $\deg_{x}\Psi_{N} = \frac{1}{2}\delta_{N}$, which is the half of the number of torsion points in $E_{s,t}(\overline{K(\scrU)})$ of order $N$.
\end{lemma}
\begin{proof}
	The definition of $\Psi_{N}$ implies that $\deg_{x}\Psi_{N}$ is equal to the half of the number of torsion points in $E_{s,t}(\overline{K(\scrU)})$ of exact order $N$. Since $E_{s,t}[N]\cong (\bbz/N\bbz)^{2}$ as abelian groups, it is enough to show that $\delta_{N}=\#X$, where $\#X$ denotes the number of elements of $X:=\{\left(a,b\right)\in (\bbz/N\bbz)^{2}:\left|\left(a,b\right)\right|=N\}$. By the Chinese Remainder Theorem, we have $\#X=\prod\limits_{\text{primes }p\mid N}\#X_{p},$ where $X_{p}:=\{\left(a,b\right)\in (\bbz/p^{n_{p}}\bbz)^{2}:\left|\left(a,b\right)\right|=p^{n_{p}}\}$ for a prime factorization $N = \prod\limits_{\text{primes }p\mid N} p^{n_{p}}$. The principle of inclusion–exclusion yields $$
		\#X_{p}=2\cdot p^{n_{p}}\cdot (p^{n_{p}}-p^{n_{p}-1}) - (p^{n_{p}}-p^{n_{p}-1})^{2} = p^{2n_{p}}(1-p^{-2}),
	$$ where the first term counts the number of pairs $\left(a,b\right)\in (\bbz/p^{n_{p}}\bbz)^{2}$ in which  at least one of $a$ and $b$ is relatively prime to $p$, while the  second term represents the number of pairs $\left(a,b\right)\in (\bbz/p^{n_{p}}\bbz)^{2}$ where both~$a$ and $b$ are relatively prime to $p$.
\end{proof}

\begin{remark}\label{rmk:psi_N}
	For each integer $N\ge 1$, we note that:  \begin{enumerate}[\normalfont(a)]
		\item $\Psi_{N}\in (\bbq\left[s,t\right])[x]\subseteq (K(\scrU))[x]$, and $\Psi_{N}$ is monic.
		\item If $N\ge 2$, for $\left(A,B\right)\in \scrU(K)$, the zeros of $\Psi_{N,A,B}$ are the $x$-coordinates of torsion points in $E_{A,B}(\overline{K})$ of order $N$.
		\item $\Psi_{N}$ is separable over $K(\scrU)$.
		\item If we assign weights to the variables as $\mathrm{wt}(x)=1$, $\mathrm{wt}(s)=2$, and $\mathrm{wt}(t)=3$, then  $\Psi_{N}\in K(\scrU)[x]$ is a homogeneous polynomial with  $\mathrm{wt}(\Psi_N)=\deg_{x} \Psi_{N}$. In other words, $\Psi_{N,D^{2}s,D^{3}t}(Dx)=D^{\deg_{x} \Psi_{N}}\Psi_{N,s,t}(x)$ for $D$ in the multiplicative group $\Gm = \bba^{1} \setminus \left\{0\right\}$.
	\end{enumerate}
\end{remark}

Next, we note that if there exists $\nu\in G_{N,A,B}$  such that $P^{\nu}=-P $ for all $P\in E_{A,B}[N]$, then $\langle \nu\rangle \unlhd G_{N,A,B}$, so we define the group, $$
	\Gbar_{N,A,B}=\begin{cases}
	G_{N,A,B}/\left\langle \nu \right\rangle,& \text{ if there is } \nu\in G_{N,A,B}\text{ such that }P^{\nu}=-P \text{ for all }P\in E_{A,B}[N],\\
	G_{N,A,B,}& \text{ otherwise}.\\
	\end{cases}
$$
Then, it turns out that $\Gbar_{N,A,B}$ is the Galois group of $\Psi_{N,A,B}$ over $K$ as follows.

\begin{lemma}\label{lem:spltting_fld}
	For any $\left(A,B\right)\in \scrU(K)$ and each integer $N\ge 3$, the Galois group of $\Psi_{N,A,B}$ over $K$ is $\Gbar_{N,A,B}$. For a given basis $\cB$ of $E_{A,B}[N]$, we define the group homomorphism \begin{align}\label{bar-r}
	\rbar_{\cB}:\Gbar_{N,A,B}\to \GL{N}/\left\{ \pm I_{2}\right\} \text{ by } \overline{\sigma}\mapsto r_{\cB}\left(\sigma\right)\left\{ \pm I_{2}\right\},
	\end{align} for each $\sigma\in G_{N,A,B}$. Then, $\rbar_{\cB}$ is injective and $$
		\rbar_{\cB}(\Gbar_{N,A,B})=\left\langle -I_{2}, r_{\cB}\left(G_{N,A,B}\right)\right\rangle/\left\{ \pm I_{2}\right\}.
	$$
	
	In particular, if  there is no $\nu\in G_{N,A,B}$ such that $P^{\nu} = -P$, then $$
		\rbar_{\cB}(\Gbar_{N,A,B})\cong r_{\cB}\left(G_{N,A,B}\right).
	$$
\end{lemma}
\begin{proof}
	Let $P_{1}:=(\alpha_{1}, \beta_{1})$ and $P_{2}:=(\alpha_{2},\beta_{2})\in E_{A,B}[N]$ form a basis of $E_{A,B}[N]\cong \bbz/N\bbz\times \bbz/N\bbz$.
	If we let	$P_{3}=P_{1}+P_{2}:=(\alpha_{3},\beta_{3})$, then $P_3\in E_{A,B}[N]$. First, we show  that the splitting field $L$ of $\Psi_{N,A,B}$ over $K$ is $K(\alpha_{1},\alpha_{2},\beta_{1}\beta_{2})$. It is enough to show that $$
		\left\{ \sigma \in G_{N,A,B}: \gamma^{\sigma} = \gamma\text{ for all } \gamma \in L \right\}
		= \left\{ \sigma \in G_{N,A,B}: \gamma^{\sigma} = \gamma\text{ for all } \gamma \in K(\alpha_{1},\alpha_{2},\beta_{1}\beta_{2}) \right\}.
	$$  
	
	If  $\sigma \in G_{N,A,B}$ fixes $L$, then for each $i=1,2,3$, we have $\alpha_{i}^{\sigma}=\alpha_{i}$  and $\beta_{i}^{\sigma}=\beta_{i}$ or $-\beta_{i}$, i.e., $\alpha_{i}$ are fixed under $\sigma$, and  $P^{\sigma} = \pm P$ for any $P\in E_{A,B}[N]$. If we let $P_{i}^{\sigma} = \epsilon_{i} P_{i} $ for $\epsilon_{i} \in \left\{\pm 1\right\}$, then we conclude that $\epsilon_{1}=\epsilon_{2}=\epsilon_{3}$ due to the assumption that $N\ge 3$. 
	Hence, there is a sign~$\epsilon\in \left\{\pm 1\right\}$ such that 
	\begin{equation}\label{spltting_fld_arguement}
		P_{i}^{\sigma}=\epsilon P_{i}\text{ for all } i=1,2,3, \quad \text{i.e., }\beta_{1}\beta_{2}\text{ is fixed by }\sigma.
	\end{equation}

	Conversely, if $\sigma \in G_{N,A,B}$ fixes $K(\alpha_{1},\alpha_{2},\beta_{1}\beta_{2})$, then for any integers $m_1$ and $m_2$, the $x$-coordinate of  $m_{1} P_{1} + m_{2} P_{2}$ is fixed under $\sigma$. Hence $ L=K(\alpha_{1},\alpha_{2},\beta_{1}\beta_{2})$.

	If $\beta_{1}\notin L$ (so $\beta_2\notin L$), then there exists $\nu\in G_{N,A,B}$ such that $P^{\nu} = -P$ for all $P \in E_{A,B}\left[N\right]$, i.e., $r_{\cB}\left(\nu\right) = -I_{2}$, and $\Gal(L/K)= G_{N,A,B}\big / \left\langle \nu \right\rangle = \Gbar_{N,A,B}$ since $\Gal(K(P_{1},P_{2})/L))=\left\langle \nu \right\rangle$. Therefore, $\rbar_{\cB}$ is well-defined and $$
		\rbar_{\cB}(\Gbar_{N,A,B})= r_{\cB}\left(G_{N,A,B}\right)/\left\{ \pm I_{2}\right\}=\left\langle -I_{2}, r_{\cB}\left(G_{N,A,B}\right)\right\rangle/\left\{ \pm I_{2}\right\}.
	$$ We note that $\rbar_{\cB}$ is injective since $r_{\cB}$ is injective and $\rbar_{\cB}^{-1}\left(\left\{\pm I_{2}\right\}\right) = \left\langle \nu\right\rangle$.

	If $\beta_{1}\in L$ (so $\beta_2\in L$), then there is no such $\nu\in G_{N,A,B}$. So $\Gal(L/K)=G_{N,A,B}=\Gbar_{N,A,B}$ and $$
		\rbar_{\cB}(\Gbar_{N,A,B})= r_{\cB}\left(G_{N,A,B}\right)=\left\langle -I_{2}, r_{\cB}\left(G_{N,A,B}\right)\right\rangle/\left\{ \pm I_{2}\right\}.
	$$ We note that $\rbar_{\cB}$ is injective since $r_{\cB}$ is injective and $\ker \rbar_{\cB} = r_{\cB}^{-1}\left(\left\{\pm I_{2} \right\}\right)$ is trivial.
\end{proof}

We define the following groups which will be used throughout this paper.

\begin{defn}\label{defn:Gal} For positive divisors $m$ and $n$ of a positive integer $N$ such that $m\mid n$, we define three groups related to  the groups $H_{N}(m,n)$ defined in Lemma~\ref{lem:demo}:
\begin{enumerate}[\normalfont(a)]
	\item $\Hbar_{N}(m,n):=\left\langle H_{N}(m,n),-I_{2}\right\rangle\big / \{\pm I_{2}\}$
	\item $H^{1}_{N}(m,n):=H_{N}(m,n) \cap \SL{N}$
	\item $\Hbar^{1}_{N}(m,n):=\left\langle H^{1}_{N}(m,n), -I_{2}\right\rangle\big / \{\pm I_{2}\}$
\end{enumerate}
\end{defn}
The following lemma computes the index of $\Hbar^{1}_{N}(m,n)$ in $\Hbar^{1}_{N}(1,1)$.
\begin{lemma}\label{lem:gpidx}
	For positive divisors $m$ and $n$ of a positive integer $N$ such that $m \mid n$, we have that referring to $\delta_{n}$ given in \eqref{eqn:delta},$$
		\left[\Hbar^{1}_{N}(1,1):\Hbar^{1}_{N}(m,n)\right] = \left[\Hbar^{1}_{n}(1,1):\Hbar^{1}_{n}(m,n)\right]
		= \begin{cases}
			m\delta_{n},&\text{ if }n=2,\\
			\frac{1}{2}m\delta_{n},&\text{ if }n\ge 3.\\
		\end{cases}
	$$
\end{lemma}

\begin{proof}
	The first equality of the indices can be obtained by considering the reduction modulo~$n$ which defines  a surjective group homomorphism $:\Hbar^{1}_{N}\left(1,1\right) \to \Hbar^{1}_{n}\left(1,1\right)$.  Since the preimage of $\Hbar^{1}_{n}\left(m,n\right)$ under this reduction is equal to $\Hbar^{1}_{N}\left(m,n\right)$, this gives the desired first equality.

	Next, let us prove the second equality. This equality is verified for $n=2$ through direct computations. Let $n\ge3$. Then, by observing the following identity,
	$$
		\left[\Hbar^{1}_{n}(1,1):\Hbar^{1}_{n}(m,n)\right] = \left[\Hbar^{1}_{n}(1,1):\Hbar^{1}_{n}(1,n)\right] \left[\Hbar^{1}_{n}(1,n):\Hbar^{1}_{n}(m,n)\right],
	$$ it suffices to prove that $\left[\Hbar^{1}_{n}(1,1):\Hbar^{1}_{n}(1,n)\right] = \frac{1}{2}\delta_{n}$ and $\left[\Hbar^{1}_{n}(1,n):\Hbar^{1}_{n}(m,n)\right]=m$. 
	
	Since $n\ge 3$, $-I_{2}$ is in $H^{1}_{n}\left(1,1\right)$ but not in $H^{1}_{n}\left(1,n\right)$ or $H^{1}_{n}\left(m,n\right)$. Therefore, we have  $\left[\Hbar^{1}_{n}\left(1,1\right):\Hbar^{1}_{n}\left(1,n\right)\right] = \frac{1}{2} \left[H^{1}_{n}\left(1,1\right):H^{1}_{n}\left(1,n\right)\right]$ and $\left[\Hbar^{1}_{n}(1,n):\Hbar^{1}_{n}(m,n)\right] = \left[H^{1}_{n}(1,n):H^{1}_{n}(m,n)\right]$. To compute $\left[H^{1}_{n}\left(1,1\right):H^{1}_{n}\left(1,n\right)\right]$, we consider the natural right group action of $\SL{n} = H^{1}_{n}\left(1,1\right)$ on the row vector space $(\bbz/n\bbz)^{2}$, i.e., $$
		(x,y)\begin{pmatrix}a&b\\c&d\end{pmatrix}=(ax+cy,bx+dy).
	$$ The subgroup $H^{1}_{n}\left(1,n\right)$ of $H^{1}_{n}\left(1,1\right)$ is the group of elements that fix $(0,1)$. Therefore, the index $\left[H^{1}_{n}\left(1,1\right):H^{1}_{n}\left(1,n\right)\right]$ is equal to the cardinality of the orbit of $(0,1)$ which is the set of row vectors of order $n$ in $\left(\bbz/n\bbz\right)^{2}$, and this cardinality is $\delta_{n}$,  as shown in the proof of Lemma~\ref{lem:psi_deg}. The equality $\left[H^{1}_{n}\left(1,n\right):H^{1}_{n}\left(m,n\right)\right] = m$ follows from that $$H^{1}_{n}\left(1,n\right) = \left\{\begin{pmatrix}1&b\\0&1\end{pmatrix} \in \SL{n}:b \in \bbz/n\bbz\right\}.$$
\end{proof}

Referring to Lemma~\ref{lem:demo} and Lemma~\ref{lem:spltting_fld}, we embed $\rbar_{\cB}(\Gbar_{N,A,B})$ into $\Hbar_{N}(m,n)$ as follows:

\begin{prop}\label{prop:Gal}
	Let $N\geq 3$ be an integer. For positive divisors $m$ and $n$ of $N $ such that $m \mid n$ and $\left(A,B\right)\in \scrU(K)$, if ${E_{A,B}(K)}[N] \supseteq \bbz/m\bbz\times \bbz/n\bbz $, then the image $\rbar_{\cB}(\Gbar_{N,A,B})$ of the Galois group $\Gbar_{N,A,B}$ of $\Psi_{N,A,B}$ over $K$ under $\rbar_{\cB}$ given in~\eqref{bar-r} is a subgroup of $\Hbar_{N}(m,n)$ for some basis $\cB$ of $E_{A,B}[N]$.
\end{prop}

Proposition~\ref{prop:Gal} relates the Galois group problem for the $N$th primitive division polynomials to the $N$-torsion subgroup problem. Furthermore, Remark~\ref{rmk:psi_N}~(d) states that the Galois groups of the primitive division polynomials are invariant under  so-called ``quadratic twists''. In  Section~\ref{sec:quad_twist}, we formally define quadratic twists of an elliptic curve and show that analyzing the Galois groups of the primitive division polynomials is sufficient to prove the main result, Theorem~\ref{thm:main} via quadratic twists.

\subsection{Quadratic Twists}\label{sec:quad_twist}
The multiplicative group $\Gm=\bba^{1}-\{0\}$ acts on $\bba^{2}$ and $\scrU$ by $D\cdot \left(A,B\right)=(D^{2}A,D^{3}B)$ for $D\in \Gm$ and $\left(A,B\right)$ in $\bba^{2}$ or $\scrU$. We call $D\cdot \left(A,B\right)$ the quadratic twist of $\left(A,B\right)$ by $D$. Equivalently, the quadratic twists of a given curve are defined as follows:
\begin{defn}
	For $\left(A,B\right)\in \bba^{2}$ and $D\in \Gm$, the curve $E_{D^{2}A,D^{3}B} : y^2=x^3+D^2Ax+D^3B$ is called the quadratic twist of $E_{A,B}$ by $D$ and denoted by $E_{A,B}^{D}$. 
\end{defn}

\begin{remark}\label{rmk:twist}$\phantom{s}$
	\begin{enumerate}[\normalfont(a)]
		\item The morphism $\phi:E^{D}_{A,B}\to E_{A,B}$ defined by $(x,y) \mapsto \left(\frac{x}{D},\frac{y}{\sqrt{D^{3}}}\right)$ is an isomorphism defined over $K(\sqrt{D})$. Any curve $E_{A,B}/K$ and its quadratic twist $E_{A,B}^{D}$ is isomorphic over $K$ if and only if $D$ is a square in $K^{\times}$.
		\item Any quadratic twist preserves the Galois group of $\Psi_{N,A,B}$, i.e., $\Gbar_{N,A,B}=\Gbar_{N,D^{2}A,D^{3}B}$, for any $D\in K^\times$ and $\left(A,B\right)\in \scrU(K)$.
		\item The action of every $D^2\in (K^{\times})^{2}$ on $\scrU(K)$ preserves the torsion subgroups, i.e., ${E_{A,B}^{D^{2}}(K)}_{\tors} \cong {E_{A,B}(K)}_{\tors}$ for all $D\in K^{\times}$, since $E_{A,B}^{D^{2}}$ is isomorphic to $E_{A,B}$ over~$K$.
	\end{enumerate}
\end{remark}

We now give a  condition that determines the existence of torsion points of order $m$ and $n$ whose $x$-coordinates and $y$-coordinates are in $K$, by taking quadratic twists.

\

For $\left(A,B\right)\in \scrU(K)$ and two positive integers $m$ and $n$ such that  $m\mid n$, we define the following condition:
\begin{equation} \label{eqn:condP}
	{\textbf{Condition }}\cP(m,n): \Psi_{m,A,B}\text{ splits completely over } K \text{ and } \Psi_{n,A,B}\text{ has a zero in } K.
\end{equation}

	\
	
\begin{prop}\label{prop:Gal_sol}
	Let $N\geq 1$ be an integer, and let $m$ and $n$ be positive divisors of $N$ such that $m\mid n$.  Then for $\left(A,B\right)\in \scrU(K)$, the following three statements are equivalent:
	\begin{enumerate}[\normalfont(a)]
		\item The image $\rbar_{\cB}(\Gbar_{N,A,B})$ of the Galois group $\Gbar_{N,A,B}$ of $\Psi_{N,A,B}$ over $K$ is a subgroup of $\Hbar_{N}(m,n)$ for some basis $\cB$ for $E_{A,B}[N]$.
		\item {\normalfont Condition}~$\cP(m,n)$ is satisfied.
		\item There exists $D\in K^{\times}$ such that $E_{A,B}^{D}(K)[N] \supseteq \bbz/m\bbz\times \bbz/n\bbz$.
	\end{enumerate}
\end{prop}
\begin{proof}
	If $n=1$, all statements hold and therefore, are equivalent. Thus, assume $n\ge 2$.

	To show that (a) implies (b), let $\cB=\{P,Q\}$ be such a basis of $E_{A,B}[N]$ as in (a). From~(a), we have that $$
		r_{\cB}\left(G_{N,A,B}\right) \subseteq \left\langle H_{N}\left(m,n\right), -I_{2}\right\rangle.
	$$ This implies that every element of $G_{N,A,B}$ moves $\tfrac{N}{n}Q$ to $\pm \tfrac{N}{n}Q$. Similarly, any point $R \in E_{A,B}\left[m\right] = \left\{\tfrac{N}{m}a P+\tfrac{N}{m}b Q:a,b\in \bbz/m\bbz\right\}$ is also moved to $\pm R$ by  each element of $G_{N,A,B}$. Consequently, the $x$-coordinates $x\left(\tfrac{N}{n}Q\right)$ and $x(R)$ are elements of $K$. Since $\Psi_{n,A,B}$ has a zero~$x\left(\tfrac{N}{n}Q\right)$, and each zero of $\Psi_{m,A,B}$ is $x\left(R\right)$ for some $R \in E_{A,B}\left[m\right]$ of order $m$, it follows that these zeros are in $K$.
	
	To show that (b) implies (c), let $\alpha \in K$ be a zero of $\Psi_{n,A,B}$. Let $$
		D:= \begin{cases}
			1, & \text{ if } n=2,\\
			\alpha^{3} + A\alpha +B, & \text{ if } n\ge 3\\
		\end{cases}
		\text{ and }
		Q_{n}:= \begin{cases}
			\left(D\alpha,0\right), & \text{ if } n=2,\\
			\left(D\alpha,D^{2}\right), & \text{ if } n\ge 3.
		\end{cases}
	$$ Then $Q_{n}$ is a point of order $n$ in $E_{A,B}^{D}\left(K\right)$ and we can choose a basis $\cB:=\{P, Q\}$ for $E_{A,B}^{D}[N]$ such that $\tfrac{N}{n}Q=Q_{n}$. If $m=1$, then $\bbz/n\bbz \cong \left\langle Q_{n} \right\rangle \subseteq E_{A,B}^{D}\left(K\right)\left[N\right]$, so (c) holds. If $m\ge2$, let $P_{m} := \tfrac{N}{m}P$. Then $x\left(P_{m}\right)$ is a zero of $\Psi_{m,D^{2}A,D^{3}B}$ and since  $\Psi_{m,D^{2}A,D^{3}B}$ splits completely over $K$, it follows that $x\left(P_{m}\right) \in K$. 
	
	Next, we show that $y\left(P_{m}\right)\in K$ as well. If $m=2$, then $y\left(P_{m}\right) = 0\in K$. For $m\ge 3$, since $P_{m}$ and $Q_{m} := \tfrac{N}{m}Q$ form a basis of $E_{A,B}^{D}\left[m\right]$ and the $x$-coordinates of all points of order $m$ in $E_{A,B}^{D}$ lie in $K$, it follows from~\eqref{spltting_fld_arguement} in the proof of Lemma~\ref{lem:spltting_fld} that $y\left(P_{m}\right)y\left(Q_{m}\right) \in K^{\times}$ and $y\left(P_{m}\right) \in K^{\times}$. Hence, we conclude that $y(P_m)\in K^\times$ and so $\bbz/m\bbz\times \bbz/n\bbz \cong \left\langle P_{m}, Q_{n}\right\rangle \subseteq E_{A,B}^{D}(K)[N]$.
	
	Finally, (c) implies (a) directly by Proposition~\ref{prop:Gal} and Remark~\ref{rmk:twist}(b).
\end{proof}

Next, we observe how many $D \in K^{\times}/(K^{\times})^{2}$ satisfy ${E_{A,B}^{D}(K)_{\tors}} \supseteq \bbz/m\bbz \times \bbz/n\bbz$ for a given elliptic curve $E_{A,B}/K$.

\begin{lemma}\label{lem:fixing_twist}
	Let $\left(A,B\right)\in \scrU(K)$ and  let $m$ and $n$ be positive integers such that $m\mid n$. Suppose that $\left(A,B\right)$ satisfies {\normalfont Condition}~$\cP(m,n)$. Then, there exists $D\in K^{\times}/(K^{\times})^{2}$ such that $E^{D}_{A,B}(K)_{\tors} \supseteq \bbz/m\bbz \times \bbz/n\bbz $, and the $y$-coordinate of each point of $E_{A,B}[m]$ is of the form~$a\sqrt{D}$ for some $a\in K$. In particular, we have the following:
	\begin{enumerate}[\normalfont(a)]
		\item If $n\le 2$, then for every $D\in K^{\times}/(K^{\times})^{2}$, we have that $E^{D}_{A,B}(K)_{\tors} \supseteq \bbz/m\bbz \times \bbz/n\bbz $.
		\item If $n\ge 3$, then the number of such  $D$ modulo $(K^{\times})^{2}$ is at most $4$.
		\item If $m\ge 3$, then such $D$ is unique modulo $(K^{\times})^{2}$.
	\end{enumerate}
\end{lemma}
\begin{proof}
	
	The existence of such  $D$  follows from the equivalence of (b) and (c) in Proposition~\ref{prop:Gal_sol}. Considering the isomorphism $\phi: E^{D}_{A,B} \to E_{A,B}$ defined by $(x,y) \mapsto \left(\frac{x}{D},\frac{y}{\sqrt{D^{3}}}\right)$, we conclude that the $y$-coordinates of all $m$-torsion points of $E_{A,B}\left[m\right]$ are of the form $a\sqrt{D}$ for some $a\in K$.

	For (a), if $n=1$, then it holds trivially for every $D \in K^{\times}/(K^{\times})^{2}$,  and if $n=2$, then it is also true, since the $y$-coordinates of points in $E_{A,B}^{D}$ of order $2$ are $0$.
	
	We first prove (c) before proceeding to (b). To prove the uniqueness of such  $D$ modulo~$(K^\times)^2$, suppose there exists another $D'$ satisfying the same conditions. Then, as stated above, the $y$-coordinate of each $m$-torsion points of $E_{A,B}(K)$ must be expressed  in the forms $b\sqrt{D}$ and $b'\sqrt{D'}$ simultaneously for some $b,b'\in K^{\times}$, since $m\ge 3$. This implies that $D'/D\in (K^\times)^2$, proving the uniqueness of $D\in K^{\times}/(K^{\times})^{2}$.

	For (b), assume that for $i=1,2,3,4,5$, there exist five distinct $D_{i}\in K^{\times}/(K^{\times})^{2}$ such that $\bbz/m\bbz \times \bbz/n\bbz \subseteq {E^{D_{i}}_{A,B}(K)}_{\tors}$. Then, for each  $i=1,2,3,4,5$, we define the isomorphism $\phi_{i}: E^{D_{i}}_{A,B} \to E_{A,B}$ by $(x,y) \mapsto \left(\frac{x}{D_{i}},\frac{y}{\sqrt{D_{i}^{3}}}\right)$ over $K(\sqrt{D_{i}})$, and let $R_{i}\in E^{D_{i}}_{A,B}(K)[n]$  be a point of order $n$. Then, we see that $$
		\bbz/m\bbz \times \bbz/n\bbz \subseteq  {E_{A,B}(K(\sqrt{D_{i}}))}_{\tors} \quad \text{ and } \quad 0\ne y(\phi_{i}(R_{i}))=a_{i}\sqrt{D_{i}}\text{ for some }a_{i}\in K^{\times}.
	$$ 
	For each choice of two distinct $i,j\in \{1,2,3,4,5\}$, we have a positive divisor $d_{ij}$ of $n$  such that $\bbz/d_{ij}\bbz \times \bbz/n\bbz \cong \left\langle \phi_{i}(R_{i}),\phi_{j}(R_{j})\right\rangle \subseteq E_{A,B}(K(\sqrt{D_{i}},\sqrt{D_{j}}))$. 
	
	If $d_{ij}=1$ for some distinct  $i$ and $j$, then $\phi_{i}(R_{i})\in \left\langle \phi_{j}(R_{j}) \right\rangle$. Since $n\geq 3$,  the non-zero element $ y(\phi_{i}(R_{i}))$ must be of the form both $a\sqrt{D_{i}}$ and $ b\sqrt{D_{j}}$  for some $a,b\in K^{\times}$. This leads a contradiction, as  $D_i$ and $D_j$ are distinct modulo~$(K^\times)^2$.
	
	If $d_{ij}\ge 3$ for some distinct $i$ and $j$, say $i=1$ and $j=2$, then let $d:=d_{12}$.  Since $ \bbz/d\bbz \times \bbz/d\bbz \subseteq \left\langle \phi_{1}(R_{1}),\phi_{2}(R_{2})\right\rangle$, we have that for $k=3,4,5$, $\frac{n}{d}\phi_{k}(R_{k})\in \left\langle \phi_{1}(R_{1}),\phi_{2}(R_{2})\right\rangle$, and $0\ne y\left(\frac{n}{d}\phi_{k}(R_{k})\right) \in K(\sqrt{D_{1}},\sqrt{D_{2}})$. Therefore, $D_{k}$ is a square in $K(\sqrt{D_{1}},\sqrt{D_{2}})$. In other words, the extension $K\left(\sqrt{D_{k}}\right) / K$ is a subextension $K\left(\sqrt{D_{1}},\sqrt{D_{2}}\right) / K$, whose degree is at most~$2$. The possible subfields of $K\left(\sqrt{D_{1}},\sqrt{D_{2}}\right)$ over $K$ are precisely $K$, $K\left(\sqrt{D_{1}}\right)$, $K\left(\sqrt{D_{2}}\right)$, and $K\left(\sqrt{D_{1}D_{2}}\right)$. Thus, exactly one of the elements $D_{k}$, $\frac{D_{k}}{D_{1}}$, $\frac{D_{k}}{D_{2}}$, and $\frac{D_{k}}{D_{1}D_{2}}$, must be a square in $K$. Since $D_{k}$ is distinct from both $D_{1}$ and $D_{2}$ modulo $(K^\times)^2$, it follows that either $D_{k}$ or~$\frac{D_{k}}{D_{1}D_{2}}$ must be a square in $K$. Then, each of three elements $D_{3}$, $D_{4}$, and $D_{5}$ coincide with one of two elements $1$ and $D_{1}D_{2}$ modulo $(K^\times)^2$. Now, since there are three elements $D_{3}$, $D_{4}$, and $D_{5}$ and only two possible values $1$ and $D_{1}D_{2}$ modulo $(K^\times)^2$, the pigeonhole principle ensures that at least two of $D_{3}$, $D_{4}$, and $D_{5}$  must be the same up to a square in $K$. This contradicts their assumed distinctness.

	If $d_{ij}=2$ for all distinct $i$ and $j$, then $n$ is even and $2\phi_{1}(R_{1})\in \left\langle \phi_{2}(R_{2}) \right\rangle$. Therefore, $y(2\phi_{1}(R_{1}))$ is of form $a\sqrt{D_{1}}$ and $b\sqrt{D_{2}}$ for some $a,b\in K$ simultaneously. Hence, $y(2\phi_{1}(R_{1}))=0$. This is a contradiction, if $n\ge6$, since in this case $y(2\phi_{i}(R_{i}))\ne 0$. If $n=4$, let $\{P,\phi_{1}(R_{1})\}$ be a basis of $E_{A,B}[4]$. Then $\phi_{2}(R_{2})=2P\pm \phi_{1}(R_{1})=\pm \phi_{3}(R_{3})$ since  each of two groups $\left\langle \phi_{1}(R_{1}),\phi_{2}(R_{2})\right\rangle$ and $\left\langle \phi_{1}(R_{1}),\phi_{3}(R_{3})\right\rangle$ is isomorphic to $\bbz/2\bbz \times \bbz/4\bbz\subseteq E_{A,B}[4]$ and both groups contain $\phi_{1}(R_{1})$. But this is a contradiction since $0\ne y(\phi_{2}(R_{2}))=\pm y(\phi_{3}(R_{3}))$ is of the form both $a\sqrt{D_2}$ and $b\sqrt{D_3}$ for some $a,b\in K$, so $y(\phi_{2}(R_{2}))=\pm y(\phi_{3}(R_{3}))=0$ again. So this completes the proof.
\end{proof}

Next, we compare two densities of elliptic curves with prescribed subgroups in terms of $m$ and $n$, and those satisfying Condition~$\cP(m,n)$. The following result shows that the asymptotic behaviors of the former and the latter are practically the same, which highlights the advantage of using Condition~$\cP(m,n)$, as it considers only $x$-coordinates of torsion points without necessarily involving the $y$-coordinates.

\begin{prop}\label{prop:growth}
	For a positive integer $N$ and its positive divisors $m$, $m'$, $n$, and $n'$ such that $m\mid n$, $m'\mid n'$, $m\mid m'$, and $n\mid n'$,  and  $n\ge 3$, we assume that there exists an elliptic curve $E/K$ such that  $E(K)\left[N\right] \supseteq \bbz/m\bbz \times \bbz/n\bbz$.  For a positive real number $X$, let 
	\begin{align*}
		f(X)&=
		\frac
		{\#\{\left(A,B\right)\in \scrU(K):\cH\left(A,B\right)\le X,{E_{A,B}(K)}[N] \supseteq \bbz/m'\bbz \times \bbz/n'\bbz\}}
		{\#\{\left(A,B\right)\in \scrU(K):\cH\left(A,B\right)\le X,{E_{A,B}(K)}[N] \supseteq \bbz/m\bbz \times \bbz/n\bbz\}}\text{ and }\\
		g(X)&=\frac
		{\#\{\left(A,B\right)\in \scrU(K):\cH\left(A,B\right)\le X, \left(A,B\right)\text{ satisfies }\cP(m',n')\}}
		{\#\{\left(A,B\right)\in \scrU(K):\cH\left(A,B\right)\le X, \left(A,B\right)\text{ satisfies }\cP(m,n)\}}.
	\end{align*} Then, for any real number $X>0$, $$ \frac{1}{4}g(X)\le f(X)\le g(X).$$
\end{prop}
\begin{proof}
	For $\left(A,B\right)\in \scrU(K)$, we consider two functions for $X\in\mathbb{R}^{>0},$
	\begin{align*}
		f_{A,B}(X)&=
		\frac
		{\#\{D\in K^{\times}:\cH(D^{2}A,D^{3}B)\le X, ~~~{E_{A,B}^{D}(K)}[N] \supseteq \bbz/m'\bbz \times \bbz/n'\bbz\}}
		{\#\{D\in K^{\times}:\cH(D^{2}A,D^{3}B)\le X,~~~ {E_{A,B}^{D}(K)}[N] \supseteq \bbz/m\bbz \times \bbz/n\bbz\}},\\
		g_{A,B}(X)&=\frac
		{\#\{D\in K^{\times}:\cH(D^{2}A,D^{3}B)\le X,~~~  D\cdot \left(A,B\right)\text{ satisfies }\cP(m',n')\}}
		{\#\{D\in K^{\times}: \cH(D^{2}A,D^{3}B)\le X,~~~ D\cdot \left(A,B\right)\text{ satisfies }\cP(m,n)\}}.
	\end{align*}
	
	Since the set $\left\{ \left(A,B\right)\in \scrU(K) : \cH\left(A,B\right)\le X\right\}$ for each positive number $X$ is finite, we can find a finite subset $\fkR\subsetneq \scrU(K)$ such that any $\left(A,B\right)\in \scrU(K)$ with $\cH\left(A,B\right)\le X$ is written as $\left(A,B\right)=(k^{2}A_{0},k^{3}B_{0})$ where $k\in K^{\times}$, for a unique $(A_{0},B_{0})\in \fkR$. Since the denominator of $f(X)$ is the sum of the denominators of $f_{A,B}(X)$ over all $\left(A,B\right)\in \fkR$. The denominators and numerators of $f(X)$ and $g(X)$ are written similarly. Hence, it is enough to show that $$\frac{1}{4} g_{A,B}(X)\le f_{A,B}(X)\le g_{A,B}(X) \text{ for each }\left(A,B\right)\in \scrU(K).$$	
	
	If $\left(A,B\right)$ does not satisfy Condition~$\cP(m',n')$, then neither does $D\cdot \left(A,B\right)$ for any $D\in K^\times$, and by Remark~\ref{rmk:psi_N}(d) and Proposition~\ref{prop:Gal_sol}, $\bbz/m'\bbz \times \bbz/n'\bbz \not \subseteq {E_{A,B}^{D}(K)}_{\tors}$ for all $D\in K^{\times}$. Hence, it follows that
	\begin{align*}
		&\{D\in K^{\times}: \cH(D^{2}A,D^{3}B)\le X,~~~ D\cdot \left(A,B\right)\text{ satisfies }\cP(m',n')\}
		=\emptyset, \quad \text{ and }\\
	& \{D\in K^{\times}: \cH(D^{2}A,D^{3}B)\le X,~~~ E_{A,B}^{D}(K)[N] \supseteq \bbz/m'\bbz \times \bbz/n'\bbz\}
		=\emptyset,
	\end{align*}
	so $f_{A,B}(X)=g_{A,B}(X)=0$, in which case the statement  holds trivially.
	
	If $\left(A,B\right)$ satisfies Condition~$\cP(m',n')$, then so do $D\cdot \left(A,B\right)$ for all $D\in K^{\times}$ by Proposition~\ref{prop:Gal_sol}. Hence, in this case, \begin{align*}
		\{D\in K^{\times}&: \cH(D^{2}A,D^{3}B)\le X,~~~ D\cdot \left(A,B\right)\text{ satisfies } \cP(m',n')\}\\
		&=
		\{D\in K^{\times}:\cH(D^{2}A,D^{3}B)\le X\}\\
		&=
		\{D\in K^{\times}: \cH(D^{2}A,D^{3}B)\le X,~~~ D\cdot \left(A,B\right)\text{ satisfies }\cP(m,n)\},
	\end{align*} so $g_{A,B}(X)=1$. By Lemma~\ref{lem:fixing_twist} and $n\ge3$, we see  that $$
		\emptyset \neq \{D\in K^{\times}/(K^{\times})^{2}:E_{A,B}^{D}(K)[N] \supseteq \bbz/m'\bbz \times \bbz/n'\bbz\}
	$$ $$
		\subseteq
		\{D\in K^{\times}/(K^{\times})^{2}:E_{A,B}^{D}(K)[N] \supseteq \bbz/m\bbz \times \bbz/n\bbz\},
	$$ and the larger set  (in fact, both sets) of the above has at most four elements. Hence, $$
		\frac{1}{4} \le \frac
		{\#\{D\in K^{\times}:\cH(D^{2}A,D^{3}B)\le X, ~~~E_{A,B}^{D}(K)[N] \supseteq \bbz/m'\bbz \times \bbz/n'\bbz\}}
		{\#\{D\in K^{\times}: \cH(D^{2}A,D^{3}B)\le X, ~~~E_{A,B}^{D}(K)[N] \supseteq \bbz/m\bbz \times \bbz/n\bbz\}}\le 1
	$$ and so $\frac{1}{4}g_{A,B}(X)=\frac{1}{4}\le f_{A,B}(X)\le 1=g_{A,B}(X)$. 
\end{proof}

\begin{remark}
	In the proof of Proposition~\ref{prop:growth}, the finiteness of points in $\scrU(K)$ with bounded height  is important. If there are infinitely many points  in $\scrU(K)$ with bounded height, we need the Axiom of Countable Choice to choose representatives of $\scrU(K)$ up to the group action and get the set $\fkR$. 
\end{remark}

Finally,  we prove that it is enough to consider only Condition~$\cP(m,n)$ instead of a torsion subgroup for our main goal.

\begin{prop}\label{prop:ignoring_twisting}
	Let $N\geq 3$ be an integer. For positive divisors $m$ and $n$ of $N$ such that $m\mid n$, assume that there exists an elliptic curve $E/K$ over $K$ such that $E_{A,B}(K)[N] \supseteq \bbz/m\bbz \times \bbz/n\bbz$. Then  Statement~\ref{Gal_stat} below implies Statement~\ref{tors_stat}. Moreover, if $n\ge 3$, then the following statements~\ref{Gal_stat} and \ref{tors_stat} are equivalent:
	\begin{enumerate}[\normalfont(a)]
		\item\label{Gal_stat} Let $m'$ and $n'$ be positive divisors of $N$ such that $m'\mid n'$, $m\mid m'$, and $n\mid n'$. If almost all elements of the set $\{\left(A,B\right)\in \scrU(K) : \left(A,B\right) \text{ satisfies  {\normalfont Condition} }~\cP(m,n)\}$  satisfy  {\normalfont Condition}~$\cP(m',n')$, then $(m',n')=(m,n)$.
		\item\label{tors_stat} Almost all $\left(A,B\right)\in \scrU(K)$ such that $E_{A,B}(K)[N] \supseteq \bbz/m\bbz \times \bbz/n\bbz$ satisfy $E_{A,B}(K)[N] \cong \bbz/m\bbz \times \bbz/n\bbz$.
	\end{enumerate}
\end{prop}
\begin{proof}
	First, suppose $n\ge 3$. If $(m,n)\ne (m',n')$, then consider $f$ and $g$ given in Proposition~\ref{prop:growth}, which implies that $f(X)$ approaches $0$ as $g(X)$ goes to $0$ and vice versa. Therefore, the following two statements are equivalent: 
	\begin{itemize}
		\item  Condition~$\cP(m',n')$ is not satisfied, for almost all $\left(A,B\right)\in \scrU(K)$ satisfying Condition~$\cP(m,n)$.
		\item  $ E_{A,B}(K)[N] \not\supseteq \bbz/m' \times \bbz/n'\bbz$, for almost all $\left(A,B\right)\in \scrU(K)$ such that $E_{A,B}(K)[N] \supseteq \bbz/m\bbz \times \bbz/n\bbz$.
	\end{itemize}
	Since the number of pairs $(m',n')$ of positive divisors $m'$ and $n'$ of $N$ which satisfy the above divisibility conditions on $m$, $m'$, $n$, and $n'$ is finite,  the proof for $n\ge 3$ is thus complete. 
	
	If $n=1,2$, then for any $\left(A,B\right)\in \scrU(K)$, we know that $E_{A,B}(K)[N] \supseteq \bbz/m\bbz \times \bbz/n\bbz$ if and only if $\left(A,B\right)$ satisfies {\normalfont Condition}~$\cP(m,n)$, since the $y$-coordinate of each non-trivial $2$-torsion point of $E_{A,B}$ is $0$.
\end{proof}

For $\left(A,B\right)\in \scrU(K)$ and each integer $N\ge 3$, we have considered the Galois group~$G_{N,A,B}$ to determine whether $E_{A,B}(K)[N]\supseteq \bbz/m\bbz \times \bbz/n\bbz$ or not. The condition that $r_{\cB}(G_{N,A,B})\subseteq H_{N}(m,n)$ for some basis $\cB$ of $E_{A,B}[N]$ implies $E_{A,B}(K)[N] \supseteq \bbz/m\bbz \times \bbz/n\bbz$ but not $E_{A,B}(K)[N]\cong \bbz/m\bbz \times \bbz/n\bbz$. Therefore, to prove that $E_{A,B}(K)[N]\cong \bbz/m\bbz \times \bbz/n\bbz$, we also need to find the condition to have that $r_{\cB'}(G_{N,A,B})\not\subseteq H_{N}(m',n')$ for any basis $\cB'$ of $E_{A,B}[N]$ and divisors $m'$ and $n'$ of $N$ such that $m'\mid n'$, $m\mid m'$, and $n\mid n'$, and we give such a condition as follows.

\begin{lemma}\label{lem:comparing_gps} For an integer $N\geq 3$, let $m$, $m'$, $n$, and $n'$ be positive divisors of $N$ such that $m\mid n$ and $m'\mid n'$. Then, the inclusion $\Hbar^{1}_{N}(m,n) \subseteq \overline{\gamma} \Hbar_{N}(m',n') \overline{\gamma}^{-1}$ holds for some $\gamma \in H_{N}\left(1,1\right)$ if and only if $m'\mid m$ and $n'\mid n$.
\end{lemma}
\begin{proof}
	The ``if" direction is immediate. To see the ``only if" direction, suppose that $\Hbar^{1}_{N}(m,n) \subseteq \overline{\gamma} \Hbar_{N}(m',n') \overline{\gamma}^{-1}$. This implies that $$
		\left\langle H_{N}^{1}(m,n),-I_{2}\right\rangle \subseteq \left\langle \gamma H_{N}^{1}(m',n')\gamma^{-1},-I_{2}\right\rangle.
	$$ First, we observe that for any positive divisor $k$ of $N$, the subgroup $H^{1}_{N}(k,k)$ is normal in $ H^{1}_{N}\left(1,1\right)$, since it is the kernel of the natural projection $H^{1}_{N}\left(1,1\right)\to H^{1}_{k}\left(1,1\right)$. Hence, we have $$
		\begin{pmatrix}1&m\\0&1\end{pmatrix} \in H_{N}^{1}(m,n) \subseteq \left\langle \gamma H_{N}^{1}(m',n') \gamma^{-1},-I_{2}\right\rangle \subseteq \left\langle \gamma H_{N}^{1}(m',m') \gamma^{-1},-I_{2}\right\rangle = \left\langle H_{N}^{1}(m',m'),-I_{2}\right\rangle
	$$ and $$
		\begin{pmatrix}1&0\\n&1\end{pmatrix} \in H_{N}^{1}(n,n) = \gamma^{-1} H_{N}^{1}(n,n) \gamma \subseteq \left\langle \gamma^{-1} H_{N}^{1}(m,n) \gamma,-I_{2}\right\rangle \subseteq \left\langle H_{N}^{1}(m',n'),-I_{2}\right\rangle.
	$$
	These inclusions occur only if $m'\mid m$ and $n'\mid n$.
	
\end{proof}

\begin{lemma}\label{lem:Gal_sol_a} For positive divisors $m$, $m'$, $n$, and $n'$ of an integer $N\ge 3$ such that $m\mid n$, $m'\mid n'$, $m \mid m'$, and $n \mid n'$. If $\Hbar^{1}_{N}\left(m,n\right) \subseteq \rbar_{\cB}\left(\Gbar_{N,A,B}\right) \subseteq \Hbar_{N}\left(m,n\right)$ for a basis~$\cB$ of $E_{A,B}\left[N\right]$, then $\left(A,B\right)$ satisfies Condition~$\cP\left(m',n'\right)$ only if $\left(m',n'\right) = \left(m,n\right)$.
\end{lemma}
\begin{proof}
	By Lemma~\ref{lem:comparing_gps}, we have $\rbar_{\cB'}\left(\Gbar_{N,A,B}\right) \subseteq \Hbar_{N}\left(m',n'\right)$  for a basis $\cB'$ of $E_{A,B}\left[N\right]$ only if $\left(m',n'\right) = \left(m,n\right)$. Therefore, by the equivalence of (a) and (b) in Proposition~\ref{prop:Gal_sol}, it follows that $\left(A,B\right)$ satisfies Condition~$\cP\left(m',n'\right)$ only if $\left(m',n'\right) = \left(m,n\right)$.
\end{proof}

\section{The proofs of the main results}\label{sec:main}

In Section~\ref{sec:trans}, we have considered only $N$-torsion subgroups $E(K)[N]$ of elliptic curves $E/K$ for a given integer  $N\ge 3$. To consider the full torsion subgroups $E(K)_{\tors}$ over $K$, we apply Merel's theorem (\cite{Merel}) which implies that  $E(K)_{\tors}=E(K)[N_K]$ for some constant integer~$N_{K}\ge 3$ depending only on $K$.

For each $(m,n)$, we find a parameterization $E^{m,n}_{r,u}$ of almost all elliptic curves satisfying Condition~$\cP(m,n)$ and show that the Galois group of the $N_{K}$th primitive polynomial of $E^{m,n}_{r,u}$ is between $\Hbar^{1}_{N_{K}}(m,n)$ and $\Hbar_{N_{K}}(m,n)$. This Galois group condition is the conclusion of Theorem~\ref{thm:min_Gal}.

The parameterizations are given in Section~\ref{sec:param} and the the proofs of Theorem~\ref{thm:main} and Corollary~\ref{cor:easy_cor} are completed in Section~\ref{sec:main_proof}.

\subsection{The trivial torsion subgroup}

We consider the trivial torsion subgroup first. 

Extending the definition of the group homomorphism $\rbar_{\cB}$ in \eqref{bar-r} of Lemma~\ref{lem:spltting_fld} and using the same notation, we define the group homomorphism \[
	\rbar_{\cB}:
	\Gal{\left(K\left(V\right)\left(x\left(\mathcal{A}_{p}\left[N\right]\right)\right)/K\left(V\right)\right)} \to \GL{N}/\left\{\pm I_{2}\right\} 
\] for a given elliptic curve $\mathcal{A}_{p}$ parameterized by a point $p\in V(K)$ of a variety $V/K$ with respect to a basis $\cB$ of $\mathcal{A}_{p}[N]$.

\begin{lemma}\label{lem:irr_Psi}
	For each integer $N\ge 3$, $\Psi_{N}$ is irreducible over the function field $K(\scrU)=K(s,t)$ and $\Hbar^{1}_{N}\left(1,1\right)\subseteq \rbar_{\cB}\left(\Gbar_{N}\right) \subseteq \Hbar_{N}\left(1,1\right)$ for any basis $\cB$ of $E_{s,t}[N]$.
\end{lemma}
\begin{proof}
	Let $\cB$ be a basis of $E_{s,t}[N]$. It is obvious that \[
		\rbar_{\cB}\left(\Gbar_{N}\right) \subseteq \GL{N}\big / \{\pm I_{2}\} = \Hbar_{N}\left(1,1\right).
	\] First, we show that $\Hbar^{1}_{N}\left(1,1\right)=\SL{N}\big / \{\pm I_{2}\}\subseteq \rbar_{\cB}\left(\Gbar_{N}\right)$. For a variable $j$ and an elliptic curve \[
		\scrE_{j}:y^{2}=x^{3} - \frac{27j}{4(j-1728)}x-\frac{27j}{4(j-1728)}
	\] defined over the function field $\bbc\left(j\right)$, we get
	
	\begin{align*}
		r_{\cB}^{-1}\left(\SL{N}\right) & = \Gal\left(\bbc\left(j\right)\left(\scrE_{j}[N]\right)\big /\bbc\left(j\right)\right) \phantom{sssssssss}\text{(by \cite[Corollary~7.5.3]{DS})}\\
		\notag & \subseteq \Gal\left(\bbc\left(s,t\right)\left(E_{s,t}[N]\right)\big/\bbc\left(s,t\right)\right) \phantom{ssss}\left(\begin{aligned}
			&\text{since } \scrE_{j} \text{ is a specialization of } E_{s,t} \\
			&\text{(for example, see \cite{StackExchnge})}
		\end{aligned}\right)\\
		\notag & \subseteq
		\Gal\left(K\left(s,t\right)\left(E_{s,t}[N]\right)\bigm/ K(s,t)\right).
	\end{align*} So we conclude that \[
		\Hbar_N^1\left(1,1\right) \subseteq \rbar_{\cB}\left(\Gbar_{N}\right).
	\]
	
	Since the group $ \Gal\left(K\left(s,t\right)\left(E_{s,t}[N]\right)\big/ K\left(s,t\right)\right)$ permutes transitively the torsion points of $E_{s,t}$ of order~$N$, the group $\Gbar_{N}$ permutes transitively the zeros of $\Psi_{N}$. This implies that $\Psi_{N}$ is irreducible.
\end{proof}

Finally, we prove our main result for the trivial torsion subgroup as a generalization of Theorem~\ref{thm:duke}  to an arbitrary number field $K$.

\begin{theorem}\label{thm:main1}
	For almost all $\left(A,B\right)\in \scrU(K)$, ${E_{A,B}(K)}_{\tors}$ is trivial.
\end{theorem}
\begin{proof}
	By Merel's theorem (\cite{Merel}), there exists an integer $N_{K}\ge 3$ such that ${E_{A,B}(K)}_{\tors}$ is $E_{A,B}(K)[N_{K}]$ for all $\left(A,B\right)\in \scrU(K)$. Theorem~\ref{thm:HIT} and Lemma~\ref{lem:irr_Psi} show that \[
		\Hbar^{1}_{N_{K}}\left(1,1\right) \subseteq \rbar_{\cB}\left(\Gbar_{N_{K},A,B}\right) \subseteq \Hbar_{N_{K}}\left(1,1\right)
	\] for some basis $\cB$ of $E_{A,B}[N_{K}]$, for almost all $\left(A,B\right) \in \scrU(K)$. 
	
	 Next, let $m'$ and $n'$ be positive divisors of $N_{K}$ such that $m'\mid n'$. Then, by Lemma~\ref{lem:Gal_sol_a}, almost all $\left(A,B\right) \in \scrU(K)$ satisfy Condition~$\cP\left(m',n'\right)$ only when $\left(m',n'\right) = \left(1,1\right)$. In other words, Proposition~\ref{prop:ignoring_twisting}\ref{Gal_stat} holds, and therefore,  Proposition~\ref{prop:ignoring_twisting}\ref{tors_stat} holds as well.
\end{proof}

\subsection{Parameterizing elliptic curves whose $j$-invariant $\ne 0,1728$ and which satisfy Condition~$\cP\left(m,n\right)$ for $\left(m,n\right) \in T_{g=0} \setminus \left\{\left(1,1\right)\right\}$}\label{sec:param}

In this subsection, we consider the pairs $(m,n) \in T_{g=0}$ such that $\left(m,n\right)\ne\left(1,1\right)$, and for each such pair $(m,n)$, we parameterize elliptic curves satisfying $\cP\left(m,n\right)$. When doing so, we need to carefully consider the $j$-invariants of these elliptic curves. We denote the $j$-invariant simply as $j$. The elliptic curves with a given $j\ne 0, 1728$ are quadratic twists of each other, meaning there is a unique elliptic curve for each $j\ne 0,1728$ up to quadratic twists. However,  elliptic curves with  $j=0$ or  $j=1728$ are infinitely many, even when considered up to quadratic twists. Therefore, we restrict our parameterization to elliptic curves with  $j \ne 0,1728$, satisfying Condition~$\cP\left(m,n\right)$, and we prove that it is sufficient to prove the main result, Theorem~\ref{thm:main}. To focus on elliptic curves with $j\ne 0,1728$, we introduce the quasi-variety $$\scrU^{*} := \left\{\left(s,t\right) \in \scrU: s, t \ne 0\right\},$$ which serves as the moduli space of the elliptic curves with  $j\ne0,1728$.

For the parameterization, we consider the modular curve $X_{1}(m,n)$ which is introduced in Section~\ref{intro} and the weighted projective space, $$
	\bbp^{d_{0},d_{1},\cdots,d_{k}}:=(\bba^{k+1}-\{0\})\bigm/\sim
$$ for positive integer weights $d_{0},d_{1},\cdots,d_{k}$, where the equivalent relation $\sim$ is given by $$(a_{0},a_{1},\cdots,a_{k})\sim (\lambda^{d_{0}}a_{0},\lambda^{d_{1}}a_{1},\cdots,\lambda^{d_{k}}a_{k}), \text{ for } (a_{0},a_{1},\cdots,a_{k})\in \bba^{k+1}-\{0\},  \text{ and all } \lambda\in \bba^{1}-\{0\}.$$ We denote by $[a_{0}:a_{1}:\cdots:a_{k}]$ the equivalent class of $(a_{0},a_{1},\cdots,a_{k})$. 

\

First, we consider the case when $m=1$.

\begin{prop}\label{prop:U1n}
	For an integer $n \ge 2$, the curve $$
		U_{1,n} := \left\{\left[s:t:x_{2}\right]\in \bbp^{2,3,1}:\Psi_{n,s,t}(x_{2})=0, \left(s,t\right) \in \scrU^{*} \right\}
	$$ is birational to $X_{1}\left(1,n\right)$ over $\bbq$.
\end{prop}
\begin{proof}
	First, the definition of $U_{1,n}$ is well-defined, since the polynomials $\Psi_{n,s,t}(x_{2})$ and $4s^{3}+27t^{2}$ are homogeneous when we assign weights to the variables such that $\mathrm{wt}(x_{2})=1$, $\mathrm{wt}(s)=2$, and $\mathrm{wt}(t)=3$, as noted in Remark~\ref{rmk:psi_N}(d).
	
	We show that the open subset $Y_{1}\left(1,n\right):=\Gamma_{1}\left(1,n\right)\backslash \{z\in \bbc:\Im z>0\}$ of $X_{1}\left(1,n\right)$ is birational to $U_{1,n}$. Following \cite[§1.5]{DS}, we recall the structure of the open set $Y_{1}\left(1,n\right)$ as the moduli space \begin{align*}
		\left\{ (E,Q): 
		\text{ an elliptic curve } E \text{ and } Q\in E \text{ such that } \left|Q\right|=n.
		\right\}\biggm/\sim,
	\end{align*} where the equivalence relation $\sim$ is defined by $(E,Q)\sim(E',Q')$ if and only if there exists an isomorphism $\phi:E\to E'$ as an isogeny such that $\phi(Q)=Q'$ (In the notation of \cite[§1.5]{DS}, we have $\Gamma_{1}\left(1,n\right)=\Gamma_{1}(n)$ and $Y_{1}\left(1,n\right)=Y_{1}(n)$). Clearly, $(E,Q)=(E,-Q)\in Y_{1}\left(1,n\right)$ via the isomorphism $E\to E$ defined by $X\mapsto -X$. We let $Y_{1}^{\circ}\left(1,n\right) \subseteq Y_{1}\left(1,n\right)$ be the open subset consisting of $(E,Q)$ such that $j\left(E\right) \ne 0,1728$. We will show that $Y_{1}^{\circ}$ is isomorphic to $U_{1,n}$.

	We construct an isomorphism $\iota: Y_{1}^{\circ}\left(1,n\right)\to U_{1,n}$ by following \cite[III.10]{Silverman}. Any elliptic curve over a field of characteristic $0$ is isomorphic to $E_{s,t}$ for some $\left(s,t\right) \in \scrU^{*}$. Recall that two elliptic curves $E_{s,t}$ and $E_{s',t'}$ of $j\ne0,1728$ are isomorphic if and only if there is $u\in \Gm$ such that $s'=su^{4}$ and $t'=tu^{6}$. Moreover, each isomorphism $E_{s,t}\to E_{s',t'}$ is of form $(x,y)\mapsto (u^{2}x,\pm u^{3}y)$. Hence, we corresponds $(E_{s,t},Q)\in Y_{1}^{\circ}\left(1,n\right)$ to $[s:t:x(Q)]\in \bbp^{4,6,2}$, uniquely. Composing with the natural morphism $\bbp^{4,6,2}\to \bbp^{2,3,1}$, we have constructed the morphism $\iota:Y_{1}^{\circ}\left(1,n\right)\to \bbp^{2,3,1}$ so far. The image of $\iota$ lying in $U_{1,n}$ because $Q$ is a point of order $n$. We let $\iota:Y_{1}^{\circ}\left(1,n\right)\to U_{1,n}$. We show that the morphism $\iota$ is an isomorphism by constructing the inverse $\kappa: U_{1,n} \to Y_{1}^{\circ}\left(1,n\right)$. For $n=2$, we let $\kappa: \left[s:t:x_{2}\right] \mapsto \left(E_{s, t},\left(x_{2},0\right)\right)$, and for $n\geq 3$, we let $\kappa: \left[s:t:x_{2}\right] \mapsto \left(E_{D^{2}s, D^{3}t},\left(Dx_{2},D^{2}\right)\right)$, where $D := x_{2}^{3}+sx_{2}+t\ne 0$. It is easy to verify that $\iota \circ \kappa$ and $\iota \circ \kappa$ are the identities of $U_{1,n}$ and $Y_{1}^{\circ}\left(1,n\right)$, respectively. Therefore, $Y_{1}^{\circ}\left(1,n\right)$ and $U_{1,n}$ are isomorphic, and  $U_{1,n}$ and $X_{1}\left(1,n\right)$ are birational.
\end{proof}

\begin{prop}\label{prop:U1n_param}
	For $\left(1,n\right) \in T_{g=0}$ such that $n \ge 2$, we assume that there exists an elliptic curve defined over $K$ with $j \ne 0,1728$  whose torsion subgroup contains $\bbz/n\bbz$. Then we have pair-wise relatively prime polynomials $s_{1,n}$, $t_{1,n}$, and $u_{2,1,n}$ in $K\left[r\right]$ such that $\left[s_{1,n}:t_{1,n}:u_{2,1,n}\right]: \bbp^{1} \dashrightarrow U_{1,n}$ defined by $\left[X:Y\right] \mapsto \left[s_{1,n}\left(\tfrac{X}{Y}\right) : t_{1,n}\left(\tfrac{X}{Y}\right) : u_{2,1,n}\left(\tfrac{X}{Y}\right) \right]$ parameterizes $U_{1,n}$ defined in Proposition~\ref{prop:U1n}.
\end{prop}
\begin{proof}
	We write the $n$th primitive division polynomial of $E_{s,t}$ as $\theta_{n}(s,t,x) := \Psi_{n,s,t}(x)\in \bbq [s,t][x]$.
	First, let $n=2,3$. If $\theta_{n}\left(s_{0},t_{0},0\right) = 0$, then the $j$-invariant of $E_{s_{0},t_{0}}$ is $0$ or $1728$. In other words, all the points of $U_{1,n}$ are of form $\left[s:t:1\right]$. Considering the explicit coefficients of $\theta_{n}$ from Definition~\ref{defn:division_polynomial} and Definition~\ref{defn:primitive_division_polynomial}, we parameterize $U_{1,2}$ and $U_{1,3}$ by $\left[r:-r-1:1\right]$ and $\left[r:\frac{r^{2} -6r -3}{12}:1\right]$, respectively. Hence, the proof is completed for $n=2,3$.
	
	For $n\ge 4$, by Remark~\ref{rmk:psi_N}(d) and Lemma~\ref{lem:psi_deg}, $\theta_{n}$ can be written as $$
		\theta_{n}(s,t,x) = \sum_{2i+3j+k=\delta_{n}/2}c_{ijk} s^{i} t^{j} x^{k}\in \bbq [s,t][x],
	$$ where $\delta_{n}$ is given in \eqref{eqn:delta}. We note that $12 \mid \delta_{n}$ by Lemma~\ref{lem:psi_deg}. Now, we show that $$
		c_{0,0,\delta_{n}/2}\ne 0, \quad c_{\delta_{n}/4,0,0}\ne 0, \quad \text{ and } \quad c_{0,\delta_{n}/6,0}\ne 0.
	$$ Obviously, the leading coefficient $c_{0,0,\delta_{n}/2}$ of $\Psi_{n}$ is $1$, not $0$. The remaining nonzero property, $c_{\delta_{n}/4,0,0}\neq 0,$ and $c_{0,\delta_{n}/6,0}\ne 0$ is equivalent to that $s,t\nmid \theta_{n}(s,t,0)\in \bbq[s,t]$. If $s\mid \theta_{n}(s,t,0)$, then $0$ is a zero of $\Psi_{n,0,t}$ and $(0,\sqrt{t})\in E_{0,t}(\overline{K(t)})$ is a torsion point of order $n$. However, a direct computation shows that $x(2(0,\sqrt{t}))=0$, and the point $(0,\sqrt{t})$ is of order $3$, which is a contradiction since $n\geq 4$. If $t\mid \theta_{n}(s,t,0)$, then $0$ is a zero of $\Psi_{n,s,0}$ and $(0,0)\in E_{s,0}(\overline{K(s)})$ is a torsion point of order $n$. Obviously, the order of the point $(0,0)$ is $2$, which is also a contradiction.

	Since $\left(1,n\right) \in T_{g=0}$, $U_{1,n}$ is a genus $0$ curve with a rational point, which corresponds to the elliptic curve with the given rational torsion containing $\bbz/n\bbz$ as assumed via the correspondence in the proof of Proposition~\ref{prop:U1n}. Hence, there exist $f,g,h_{2}\in K\left(r\right)$ such that $\left[f:g:h_{2}\right]: \bbp^{1} \dashrightarrow U_{1,n}$ parameterizes $U_{1,n}$. For an irreducible polynomial $p \in K\left[r\right]$,  let $v_{p}$ denote the discrete valuation on $K\left(r\right)$ at the prime ideal $p K \left[r\right]$. We may assume that \begin{equation}\label{eqn:vp_bd}
		0 \le \min \left\{v_{p}\left(f\right)/2, v_{p}\left(g\right)/3, v_{p}\left(h_{2}\right)\right\}<1 \text{ for all } p
	\end{equation} by replacing $f$, $g$, and $h_{2}$ by $p^{-2k}f$, $p^{-3k}g$, and $p^{-k}h_{2}$, respectively, where $$k :=  \min \left\{\left\lfloor \frac{v_{p}\left(f\right)}{2}\right\rfloor, \left\lfloor \frac{v_{p}\left(g\right)}{3}\right\rfloor, v_{p}\left(h_{2}\right)\right\}.$$ This replacing is legitimate, since $\left[p^{-2k}f:p^{-3k}g:p^{-k}h_{2}\right]=\left[f:g:h_{2}\right]$. We can achieve the inequality in~\eqref{eqn:vp_bd} after a finite number of such replacements, since one of $v_{p}\left(f\right)$, $v_{p}\left(g\right)$, and $v_{p}\left(h_{2}\right)$ is non-zero only for a finite number of primes~$p$.

	On the other hand,
	\begin{align}\label{eqn:two_min_val}
		\notag &\text{ for any irreducible polynomial } p \in K\left[r\right],\\
		&\text{ at least two of the values } v_{p}\left(f\right)/2, v_{p}\left(g\right)/3, \text{ and } v_{p}\left(h_{2}\right)  \text{ attain the minimum.}
	\end{align} If not, for example, $v_{p}\left(h_{2}\right)$ is the unique minimal value among them, i.e., $v_{p}\left(h\right) < v_{p}\left(f\right)/2, v\left(g\right)/3$, then we have $$
		\infty = v_{p}(0)=\left(\sum_{2i+3j+k=\delta_{n}/2}c_{ijk}f^{i}g^{j}h_{2}^{k}\right) = v_{p}\left(c_{0,0,\delta_{n}/2}h_{2}^{\delta_{n}/2}\right) = \frac{\delta_{n}}{2} v_{p}\left(h_{2}\right).
	$$ It happens only if $v_{p}\left(h_{2}\right) = \infty$, but this contradicts that $v_{p}\left(h_{2}\right) < v_{p}\left(f\right)/2, v_{p}\left(g\right)/3$.

	\eqref{eqn:two_min_val} implies that the minimal value among $v_{p}\left(f\right)/2$, $v_{p}\left(g\right)/3$, and $v_{p}\left(h_{2}\right)$ is an integer. By \eqref{eqn:vp_bd}, the minimal value must be $0$. Again by \eqref{eqn:two_min_val}, at least two of $f$, $g$, and $h_{2}$ are not divisible by any $p$. This means that $f$, $g$, and $h_{2}$ are pair-wise relatively prime polynomials.
	
\end{proof}

Proposition~\ref{prop:U1n_param} parameterizes the elliptic curves of $j\ne 0,1728$ satisfying Condition~$\cP(1,n)$ for each $(1,n) \in T_{g=0}$ such that $n \ge 2$. Now, we parameterize all elliptic curves of $j\ne 0,1728$ satisfying Condition~$\cP(m,n)$ for each $(m,n) \in T_{g=0}$ when $m\ge 2$ and $g_{m,n}=0$. 

Throughout this paper, we let $e_{m}$ denote the Weil pairing (see \cite[III.8]{Silverman}) and we fix a primitive $m$th root of unity $\zeta_{m}$.

\begin{lemma}\label{lem:eta} Let $m$ and $n$ be positive integers such that $m\ge 2$ and $m \mid n$ and let 
	\begin{align*}
		W_{m,n}:=\left\{\left[s:t:x_{1}:x_{2}\right]\in \bbp^{2,3,1,1}:
		\Psi_{m,s,t}(x_{1})=\Psi_{n,s,t}(x_{2})=0, \left(s,t\right) \in \scrU^{*}\right\}.
	\end{align*}
	Then, there exists a rational function $$
		\eta_{m,n}:W_{m,n}\to M:=\left\{z\in \bba^{1}: \xi\left(z\right) = 0\right\}
	$$ defined over $\bbq$ such that $$
		\eta_{m,n}\left(s,t,x_{1},x_{2}\right) = e_{m}\left(P_{m},\tfrac{n}{m}Q_{n}\right) + e_{m}\left(P_{m},-\tfrac{n}{m}Q_{n}\right)
	$$ and $$
		\eta_{m,n}\left(D^{2}s,D^{3}t, Dx_{1}, Dx_{2}\right) = \eta_{m,n}\left(s,t,x_{1},x_{2}\right)
	$$ for any $D \in \Gm$, where $P_{m}$ and $Q_{n}$ are points on $E_{s,t}$ with $x$-coordinates $x_{1}$ and $x_{2}$ and $\xi\left(T\right) := \prod \limits_{k\in \left(\bbz/m\bbz\right)/\left\{\pm1\right\}} \left(T - \left(\zeta_{m}^{k}+\zeta_{m}^{-k}\right)\right)$ is the polynomial over $\bbq$.
\end{lemma}
\begin{proof}
	First, the definition of $W_{m,n}$ is well-defined for reasons similar to those in the beginning of the proof of Proposition~\ref{prop:U1n}.
	
	We let $y_{1} = \sqrt{x_{1}^{3}+sx_{1}+t}$ and $y_{2} = \sqrt{x_{2}^{3}+sx_{2}+t}$ be the $y$-coordinates of $P_{m}$ and $Q_{n}$, respectively. Then $$
		\eta_{m,n} := \eta_{m,n}\left(s,t,x_{1},x_{2}\right) = e_{m}\left(P_{m},Q_{m}\right) + e_{m}\left(P_{m},-Q_{m}\right) \in \bbq\left(s,t,x_{1},x_{2}\right)\left(y_{1},y_{2}\right),
	$$ where $Q_{m} := \tfrac{n}{m} Q_{n}$. We show that $\eta_{m,n}\in \bbq(s,t,x_{1},x_{2})$. Every $\sigma$ in the absolute Galois group of $\bbq(s,t,x_{1},x_{2})$ moves $P_{m}$  to $\epsilon_{1} P_{m}$, and $Q_{n}$ to  $\epsilon_{2} Q_{n}$ for some $\epsilon_{1},\epsilon_{2} \in \left\{\pm1\right\}$. Then, we have \begin{align*}
		\eta_{m,n}^{\sigma}
		& = \left(e_{m}\left(P_{m},Q_{m}\right)\right)^{\sigma} + \left(e_{m}\left(P_{m},-Q_{m}\right)\right)^{\sigma} \\
		\notag & = e_{m}\left(P_{m}^{\sigma},Q_{m}^{\sigma}\right) + e_{m}\left(P_{m}^{\sigma},-Q_{m}^{\sigma}\right) \\
		\notag & = \left(e_{m}\left(P_{m},Q_{m}\right)\right)^{\epsilon_{1}\epsilon_{2}} + \left(e_{m}\left(P_{m},Q_{m}\right)\right)^{-\epsilon_{1}\epsilon_{2}}\\
		\notag & = e_{m}\left(P_{m},Q_{m}\right) + e_{m}\left(P_{m},-Q_{m}\right)\\
		\notag & = \eta_{m,n}.
	\end{align*}
We have shown that $\eta_{m,n} \in \bbq(s,t,x_{1},x_{2})$.

	Note that $\eta_{m,n}(s,t,x_{1},x_{2}) = \eta_{m,n}(D^{2}s,D^{3}t,Dx_{1},Dx_{2})$, as the Weil pairing is invariant under any isomorphism, particularly under the isomorphism $E_{s,t}\to E_{s,t}^{D}$ given by $\left(x,y\right)\mapsto \left(Dx,\sqrt{D^{3}}y\right)$.
\end{proof}

\begin{prop}\label{prop:Umn}
	For $\eta_{m,n}$ in Lemma~\ref{lem:eta} and positive integers $m$ and $n$ such that $m\ge 2$ and $m \mid n$, the curve $$
		U_{m,n} := \eta_{m,n}^{-1}\left(\zeta_{m}+\zeta_{m}^{-1}\right)
	$$ is birational to $X_{1}\left(m,n\right)$ over $\bbq\left(\zeta_{m}\right)$.
\end{prop}
\begin{proof}
	We show that the open subset $Y_{1}(m,n)=\Gamma_{1}(m,n)\backslash \{z\in \bbc: \Im z>0\}$ of $X_{1}(m,n)$ is birational to $U_{m,n}$. Following \cite[§1.5]{DS}, we recall the structure of the open set $Y_{1}(m,n)$ as the moduli space \begin{align*}
		\left\{ 
		(E,P_{m},Q_{n}):\begin{aligned}
			&\text{ an elliptic curve } E \text{ with two points } P_{m} \text{ and } Q_{n} \text{ s.t. } \\
			&\left|P_{m}\right|=m, \left|Q_{n}\right|=n, \text{ and } e_{m}\left(P_{m},\tfrac{n}{m}Q_{n}\right) = \zeta_{m}.	
		\end{aligned}\right\}\biggm/\sim,
	\end{align*} where the equivalence relation $\sim$ is defined by $(E,P_{m},Q_{n})\sim(E',P'_{n},Q'_{n})$ if and only if there is an isogeny $\phi:E\to E'$ as an isomorphism such that $\phi(P_{m})=P_{m}'$ and $\phi(Q_{n})=Q_{n}'$. Obviously, we have that $(E,P_{m},Q_{n})=(E,-P_{m},-Q_{n})\in Y_{1}(m,n)$ via the isomorphism $E\to E$ defined by $X\mapsto -X$. We let $Y_{1}^{\circ}\left(m,n\right) \subseteq Y_{1}\left(m,n\right)$ be the open subset consisting of $(E,P,Q)$ such that $j\left(E\right) \ne 0,1728$. 
	
	We show that $Y_{1}^{\circ}\left(m,n\right)$ is isomorphic to $U_{m,n}$. We construct an isomorphism $\iota: Y_{1}^{\circ}(m,n)\to U_{m,n}$. Using a similar argument as in the proof of Proposition~\ref{prop:U1n}, we associate a point $(E_{s,t},P_{m},Q_{n})\in Y_{1}^{\circ}(m,n)$ with $[s:t:x(P_{m}):x(Q_{n})]\in \bbp^{2,3,1,1}$. The image of $\iota$ lies in $W_{m,n}$, since $\left|P_{m}\right| = m$ and $\left|Q_{n}\right| = n$. Moreover, the image of $\iota$ lies in $U_{m,n}$, since letting $Q_{m} := \tfrac{n}{m} Q_{n}$, we have that $$
		\eta_{m,n}\left(s,t,x\left(P_{m}\right),x\left(Q_{n}\right)\right) = e_{m}\left(P_{m},Q_{m}\right) + e_{m}\left(P_{m},-Q_{m}\right) = \zeta_{m}+\zeta_{m}^{-1}.
	$$
	
	Let $\iota:Y_{1}^{\circ}(m,n)\to U_{m,n}$. To prove that $Y_{1}^{\circ}(m,n)$ and $U_{m,n}$ are isomorphic over $\bbq\left(\zeta_{m}\right)$, it suffices to construct the inverse $\kappa: U_{m,n} \to Y_{1}^{\circ}\left(m,n\right)$ of $\iota$ over $\bbq\left(\zeta_{m}\right)$.
	
	For $m=n=2$, if we define $\kappa: U_{2,2} \to Y_{1}^{\circ}\left(2,2\right)$ by $\left[s:t:x_{1}:x_{2}\right] \mapsto \left(E_{s,t},\left(x_{1},0\right),\left(x_{2},0\right)\right)$, then it is clear that $\kappa \circ \iota$ and $\iota \circ \kappa$ are the identities of $Y_{1}^{\circ}\left(2,2\right)$ and $U_{2,2}$, respectively.

	If $m=2<n$, if we define $\kappa: U_{2,n} \to Y_{1}^{\circ}\left(2,n\right)$ by $\left[s:t:x_{1}:x_{2}\right] \mapsto \left(E_{s,t}^{D},\left(Dx_{1},0\right),\left(Dx_{2},D^{2}\right)\right)$ where $D:=x_{2}^{3}+sx_{2}+t$ which is non-zero recalling that $n\ge 3$, then it is clear that $\kappa \circ \iota$ and $\iota \circ \kappa$ are the identities of $Y_{1}^{\circ}\left(2,n\right)$ and $U_{2,n}$, respectively.
	
	If $m \ge 3$, for $\left[s:t:x_{1}:x_{2}\right] \in U_{m,n}$, we let $\mathcal{D} := \left(x_{1}^{3}+sx_{1}+t\right)\left(x_{2}^{3}+sx_{2}+t\right) \in \bbq\left(\zeta_{m}\right)\left(U_{m,n}\right)$ and $D := x_{2}^{3}+sx_{2}+t$ which is non-zero since $n\ge 3$. For two points $P'_{m} := \left(Dx_{1}, D\sqrt{\mathcal{D}}\right) \in E_{s,t}^{D}\left(\bbq\left(\zeta_{m}\right)\left(U_{m,n}\right)\left(\sqrt{\mathcal{D}}\right)\right)$, $Q'_{n} := \left(Dx_{2}, D^{2}\right) \in E_{s,t}^{D}\left(\bbq\left(\zeta_{m}\right)\left(U_{m,n}\right)\right)$, and $z := e_{m}\left(P'_{m},Q'_{m}\right)$, we have $$
		z+z^{-1} = e_{m}\left(P'_{m},Q'_{m}\right) + e_{m}\left(P'_{m},Q'_{m}\right) = \eta_{m,n}(D^{2}s,D^{3}t,Dx_{1},Dx_{2}) = \eta_{m,n}(s,t,x_{1},x_{2})=\zeta_{m}+\zeta_{m}^{-1}
	$$ and $z=\zeta_{m}^{\epsilon'}$ for some $\epsilon' \in \{\pm 1\}$, where $Q'_{m} := \tfrac{n}{m} Q'_{n}$. For  $\sigma$ in the absolute Galois group $G_{\bbq\left(\zeta_{m}\right)\left(U_{m,n}\right)}$ of $\bbq\left(\zeta_{m}\right)\left(U_{m,n}\right)$ and the character $\chi: G_{\bbq\left(\zeta_{m}\right)\left(U_{m,n}\right)} \to \left\{\pm1\right\}$ defined by $\chi\left(\tau\right) = \frac{\sqrt{\mathcal{D}}^{\tau}}{\sqrt{\mathcal{D}}}$, we have $\left(P'\right)^{\sigma} = \chi\left(\sigma\right) P'$ and $\left(Q'\right)^{\sigma} = Q'$. Therefore, we conclude that  $$
		\zeta_{m}^{\epsilon'} = \left(\zeta_{m}^{\epsilon'}\right)^{\sigma} = e_{m}\left(\left(P'_{m}\right)^{\sigma},\left(Q'_{m}\right)^{\sigma}\right) = \zeta_{m}^{\epsilon'\chi\left(\sigma\right)},
	$$ and hence, $\chi$ is identically $1$, and $\sqrt{\mathcal{D}} \in \bbq\left(\zeta_{m}\right)\left(U_{m,n}\right)$ by $m\ge 3$. For $D' := \epsilon' \sqrt{\mathcal{D}} \in \bbq\left(\zeta_{m}\right)\left(U_{m,n}\right)$, we have $\left(E_{s,t}^{D},\left(Dx_{1}, DD'\right),\left(Dx_{2}, D^{2}\right)\right) \in Y_{1}^{\circ}\left(m,n\right)$. We define $\kappa: U_{m,n} \to Y_{1}^{\circ}\left(m,n\right)$ by $\left[s:t:x_{1}:x_{2}\right] \mapsto \left(E_{s,t}^{D},\left(Dx_{1}, DD'\right),\left(Dx_{2}, D^{2}\right)\right)$. It is clear that $\iota \circ \kappa$ is the identity morphism of $U_{m,n}$. To show that $\kappa \circ \iota$ is the identity morphism of $Y_{1}^{\circ}\left(m,n\right)$, for $\left(E_{s,t},\left(x_{1},y_{1}\right),\left(x_{2},y_{2}\right)\right) \in Y_{1}^{\circ}\left(m,n\right)$, we have that $D = x_{2}^{3} + sx_{2} + t = y_{2}^{2}$, and $\left(y_{1}y_{2}\right)^{2} = D\left(x_{1}^{3} + sx_{1} + t\right) = \mathcal{D}$. Thus, $y_{1}y_{2} = \epsilon D'$ for some $\epsilon \in \left\{\pm1\right\}$. Therefore, we have $$
		\left(\kappa \circ \iota \right)\left(E_{s,t},\left(x_{1},y_{1}\right),\left(x_{2},y_{2}\right)\right)
		= \left(E_{s,t}^{y_{2}^{2}},\left(y_{2}^{2}x_{1},\epsilon y_{1}y_{2}^{3}\right),\left(y_{2}^{2}x_{2},y_{2}^{4}\right)\right).
	$$ Considering the Weil paring, $\epsilon =1$, and we have an isomorphism $\phi: E_{s,t} \to E_{s,t}^{y_{2}^{2}}$ defined by $\left(x,y\right) \mapsto \left(y_{2}^{2}x,y_{2}^{3}y\right)$ over $\bbq\left(\zeta_{m}\right)\left(X_{m,n}\right)$. Moreover, $\phi$ maps two points $\left(x_{1},y_{1}\right)$ and $\left(x_{2},y_{2}\right)$ to $\left(y_{2}^{2}x_{1},y_{1}y_{2}^{3}\right)$ and $\left(y_{2}^{2}x_{2},y_{2}^{4}\right)$, respectively. Hence, $\kappa \circ \iota$ is the identity morphism of $Y_{1}^{\circ}\left(m,n\right)$.
\end{proof}

\begin{remark}
	The open subset $Y_{1}(m,n)$ of the modular curve $X_{1}(m,n)$ and the quasi-variety~$U_{m,n}$ do not depend on the choice of a primitive $m$th root $\zeta_{m}$ of unity. Any primitive $m$th root of unity is for some $a\in (\bbz/m\bbz)^{\times}$. Let $b\in (\bbz/m\bbz)^{\times}$ be the inverse of $a$. If $Y_{1}(m,n)'$ is defined by $\zeta_{m}^{a}$ instead of $\zeta_{m}$ in the definition of $Y_{1}(m,n)$, we define the morphism $Y_{1}(m,n)\rightarrow Y_{1}(m,n)'$ by $(E,P,Q)\mapsto (E,aP,Q)$. Obviously, its inverse is the morphism defined by $(E,P,Q)\mapsto (E,bP,Q)$. Similarly, if $U_{m,n}'$ is defined by $\zeta_{m}^{a}$ instead of $\zeta_{m}$ in the definition of $U_{m,n}$, then the morphism $U_{m,n}\rightarrow U_{m,n}'$ defined by $\left[s:t:x_{1}:x_{2}\right]\to \left[s:t:\frac{\phi_{a}}{\psi_{a}^{2}}(x_{1}),x_{2}\right]$ is an isomorphism, since $\frac{\phi_{a}}{\psi_{a}^{2}}(x_{1})$ is the $x$-coordinate of the multiplication by $a$ of a point whose $x$-coordinate is $x_{1}$. (See the explanation below Definition~\ref{defn:division_polynomial} for further details.)
\end{remark}

\begin{prop}\label{prop:Umn_param}
	For  $\left(m,n\right) \in T_{g=0}$ such that $m\ge 2$, we assume that there exists an elliptic curve of $j \ne 0,1728$ defined over $K$ whose torsion subgroup contains $\bbz/m\bbz/ \times \bbz/n\bbz$. Then, there exist polynomials $s_{m,n}$, $t_{m,n}$, $u_{1,m,n}$, and $u_{2,m,n}$ in $K\left[r\right]$ such that $\gcd\left(s_{m,n},t_{m,n}\right) = 1$, and the rational morphism $\left[s_{m,n}:t_{m,n}:u_{1,m,n}:u_{2,m,n}\right]: \bbp^{1} \dashrightarrow U_{m,n}$ defined by $\left[X:Y\right] \mapsto \left[s_{1,n}\left(\tfrac{X}{Y}\right) : t_{1,n}\left(\tfrac{X}{Y}\right) : u_{2,1,n}\left(\tfrac{X}{Y}\right) : u_{2,2,n}\left(\tfrac{X}{Y}\right) \right]$ parameterizes $U_{m,n}$ given in Proposition~\ref{prop:Umn}.
\end{prop}
\begin{proof}
	Since $\left(m,n\right) \in T_{g=0}$, by Proposition~\ref{prop:Umn}, there exist $f,g,h_{1},h_{2}\in K\left(r\right)$ such that $\left[f:g:h_{1}:h_{2}\right]: \bbp^{1} \dashrightarrow U_{m,n}$ parameterizes $U_{m,n}$.
	
	First, we consider $n = 2,3$. We let $$
        s_{2,2} = -\left(r^{2}+r+1\right), \quad t_{2,2} = r\left(r+1\right), \quad u_{1,2,2} = r, \quad u_{2,2,2} = 1,
    $$ and $$
        s_{3,3} = 3r\left(3r^{2}+3r+1\right), \quad t_{3,3} = \frac{\left(3r^{2}-1\right)\left(9r^{4}+18r^{3}+18r^{2}+6r+1\right)}{4},u_{1,3,3} = r, \quad u_{2,3,3} = 1.
    $$ Then, direct computation shows that $u_{1,m,n}$ and $u_{2,m,n}$ are zeros of $\Psi_{n,s_{m,n},t_{m,n}}$. Since $n=2,3$, we have $\left[s_{m,n}:t_{m,n}:u_{1,m,n}:u_{2,m,n}\right] \in U_{m,n}$. Moreover, this $\left[s_{m,n}:t_{m,n}:u_{1,m,n}:u_{2,m,n}\right]$ is a parmaeterization of $U_{m,n}$. Indeed, if it were not a parameterization, then it would coincide with $\left[f:g:h_{1}:h_{2}\right] \circ \varphi$ for some $\varphi \in K\left(r\right)$ of $\deg\left(\varphi: \bbp^{1} \dashrightarrow \bbp^{1} \right) \ge 2$. Since $u_{2,m,n}=1$, we obtain $$
		\left[\frac{f}{h_{2}^{2}}:\frac{g}{h_{2}^{2}}:\frac{h_{1}}{h_{2}}:1\right] \circ \varphi = \left[f:g:h_{1}:h_{2}\right] \circ \varphi = \left[s_{m,n}:t_{m,n}:u_{1,m,n}:1\right],
	$$ which leads to the contradiction that $\deg \varphi \mid \deg u_{1,m,n} = 1$.

	Now we let $n\ge 4$. As in the proof of Proposition~\ref{prop:U1n_param}, we have that
	\begin{align}\label{eqn:two_min_val2}
		\notag &\text{ for any irreducible polynomial } p \in K\left[r\right],\\
		&\text{ at least two of the values } v_{p}\left(f\right)/2, v_{p}\left(g\right)/3, \text{ and } v_{p}\left(h_{2}\right)  \text{ attain the minimum,}
	\end{align} and we may assume that\begin{align}\label{eqn:vp_bd'}
		0 \le \min \left\{v_{p}\left(f\right)/2, v_{p}\left(g\right)/3, v_{p}\left(h_{1}\right), v_{p}\left(h_{2}\right)\right\}<1 \text{ for all } p.
	\end{align} To prove $f$ and $g$ are relatively prime, it is enough to show that either $v_{p}\left(f\right)$ or $v_{p}\left(g\right)$ is $0$ for any $p$. Suppose that both of them are positive for some irreducible polynomial $p$. By \eqref{eqn:two_min_val2}, the minimal value among $v_{p}\left(f\right)/2$, $v_{p}\left(g\right)/3$, and $v_{p}\left(h_{2}\right)$ is a positive integer. \eqref{eqn:vp_bd'} implies $v_{p}\left(h_{1}\right) = 0$, which contradicts the congruence $$
		0 = \Psi_{m,f,g}\left(h_{1}\right) \equiv h_{1}^{\deg_{x} \Psi_{m,f,g}} \pmod{pK\left[r\right]}.
	$$
\end{proof}

We have parameterized the elliptic curves with $j\ne 0,1728$ that satisfy Condition~$\cP\left(m,n\right)$ for $\left(m,n\right) \in T_{g=0} \setminus \left\{\left(1,1\right)\right\}$. Next, in Theorem~\ref{thm:min_Gal}, we verify that the Galois group associated with $N$-torsion subgroup of the elliptic curve, given in \eqref{eqn:Emn} below and parameterizing Condition~$\cP\left(m,n\right)$, lies between $\Hbar_{N}^{1}\left(m,n\right)$ and $\Hbar_{N}\left(m,n\right)$.
To prove Theorem~\ref{thm:min_Gal}, we will use the following well-known fact.
\begin{lemma}[{\cite[§13.2. Exercises~\#18]{Dummit-Foote}}]\label{lem:fld_theory}
	Let $F$ be a field and $u$ be an indeterminate. For two non-zero relatively prime polynomials $f,g\in F[X]$, let $r_{0} \in \overline{F(u)}$ be a solution of the equation $u=\frac{f(X)}{g(X)}$ for $X$. Then, we have $[F(r_{0}):F(u)]=\max\{\deg f,\deg g\}$.
\end{lemma}

\begin{theorem}\label{thm:min_Gal}
	For  $\left(m,n\right) \in T_{g=0} \setminus \left\{\left(1,1\right)\right\}$, assume that there exists an elliptic curve defined over $K$ with $j \ne 0,1728$  whose torsion subgroup contains $\bbz/m\bbz \times \bbz/n\bbz$. Then, there are two relatively prime polynomials $f$ and $g$ in $K\left[r\right]$ satisfying the following properties; \begin{enumerate}[\normalfont(a)]
		\item For any positive $N$ which is divisible by $n$ and for some (ordered) basis $\cB'$ of $E^{m,n}_{r,u}[N]$, $$
			\Hbar^{1}_{N}(m,n) \subseteq \rbar_{\cB'}\left(\Gbar'_{N}\right) \subseteq \Hbar_{N}(m,n),
		$$ where $\Gbar'_{N} := \Gal\left(K\left(\mathscr{W}\right)\left(E^{m,n}_{r,u}\left[N\right]\right)/K\left(\mathscr{W}\right)\right)$ is the Galois group of $N$-torsion of the elliptic curve \begin{equation}\label{eqn:Emn}
			E^{m,n}_{r,u}:y^{2}=x^{3}+u^{2}f(r)x+ u^{3}g(r)
		\end{equation} over the function field $K(\mathscr{W})$, where $\mathscr{W}:=\{(r,u)\in \bba^{2}: u\ne 0, (f\left(r\right),g\left(r\right)) \in \scrU^{*}\}$.
		\item $\max\left\{3\deg f, 2\deg g\right\} = \begin{cases}
			m\delta_{n},&\text{ if }n=2,\\
			\frac{1}{2}m\delta_{n},&\text{ if }n\ge 3, \\
		\end{cases}$ where $\delta_{n}$ is given in \eqref{eqn:delta}.
		\item All but finitely many elliptic curves with $j\ne 0,1728$ satisfying {\normalfont Condition}~$\cP\left(m,n\right)$ (up to quadratic twists) are specializations of $E^{m,n}_{r,u}$.
	\end{enumerate}
	
\end{theorem}
\begin{proof}
	The proof for the case $m = 1$ is similar to, yet quite simpler than the proof for the cases when $m \ge 2$. Hence, we assume that $m\ge2$. We take $f=s_{m,n}$ and $g=t_{m,n}$ be $s_{m,n}$ as given in Proposition~\ref{prop:Umn_param}. Note that  $\gcd\left(f,g\right) = 1$ by Proposition~\ref{prop:Umn_param}.
	
	To prove (a), we show that there exist a non-zero $D \in K\left(r\right)$ and two linearly independent points $P_{m}$ of order $m$ and~$Q_{n}$ of order $n$ in $\scrE_{j}^{D}\left(K\left(r\right)\right)$, where $j := 1728\frac{4f^{3}}{4f^{3}+27g^{2}} \in K\left(r\right)$ and $$
		\scrE_{j} = E^{m,n}_{r,f/g}: y^{2}=x^{3}-\frac{27j}{4(j-1728)}x-\frac{27j}{4(j-1728)}.
	$$ 
	Since $E^{m,n}_{r,u}$ is a quadratic twist of $\scrE_{j}$ by $u g/f \in K(r,u)$ and the equation of $\scrE_{j}$ has no coefficients involving $u$, we have \begin{align*}
		\Gbar'_{N} & = \Gal\left(K(r,u)(x(E^{m,n}_{r,u}[N]))\big/ K(r,u)\right) \\
		\notag & = \Gal\left(K(r,u)(x(\scrE_{j}[N]))\big/ K(r,u)\right)\\
		\notag & = \Gal\left(K(r)(x(\scrE_{j}[N]))\big/ K(r)\right).
	\end{align*} Now, we will verify that $\Gal\left(K(r)(x(\scrE_{j}[N]))\big/ K(r)\right)$ lies between $\Hbar^{1}_{N}(m,n)$ and $\Hbar_{N}(m,n)$. By Proposition~\ref{prop:Umn_param},  for the $k$th primitive division polynomial $\Phi_{k,r,u}\left(x\right)$ of $E^{m,n}_{r,u}$, we have two polynomials $h_{1}$ and $h_{2}$ in $K\left[r\right]$ such that \begin{itemize}
		\item $h_{1}$ is a zero of $\Phi_{m,r,1}\left(x\right)$,
		\item $h_{2}$ is a zero of $\Phi_{n,r,1}\left(x\right)$, and
		\item $\eta_{m,n}\left(f,g,h_{1},h_{2}\right) = \zeta_{m} + \zeta_{m}^{-1}$.
	\end{itemize}
We let $$
		D := \begin{cases}
			1, & \text{ if } n=2,\\
			\left(\frac{fh_{2}}{g}\right)^{3}+\frac{f^{3}}{g^{2}}\frac{fh_{2}}{g}+\frac{f^{3}}{g^{2}}, & \text{ if } n\ge3
		\end{cases}
		\text{ and }
		Q_{n} := \begin{cases}
			\left(D\frac{fh_{2}}{g}, 0\right), & \text{ if } n=2,\\
			\left(D\frac{fh_{2}}{g}, D^{2}\right), & \text{ if } n\ge3.
		\end{cases}
	$$ Then, $Q_{n}$ is a point of order $n$ in $\scrE_{j}^{D}\left(K\left(r\right)\right)$, since $Dfh_{2}/g$ is a zero of the $n$th primitive division polynomial $\Phi_{n,r,Df/g}$ of $\scrE_{j}^{D} = \left(E^{m,n}_{r,1}\right)^{Df/g}$. Similarly, we have a point $P_{m} \in \scrE_{j}^{D}\left(\overline{K\left(r\right)}\right)$ of order $m$ with $x$-coordinate $Dfh_{1}/g$.
	
	We show that $P_{m} \in \scrE_{j}^{D}\left(K\left(r\right)\right)$. If $m=2$, then $P_{m} \in \scrE_{j}^{D}\left(K\left(r\right)\right)$ since $y\left(P_{m}\right)=0$. If $m\ge 3$, we let $Q_{m} := \tfrac{n}{m} Q_{n}$ and $z := e_{m}\left(P_{m},Q_{m}\right)$, where $e_{m}$ denotes the Weil pairing. Since $\scrE_{j}^{D} = E_{f^{3}/g^{2},f^{3}/g^{2}}^{D}$, we have $$
		z + z^{-1} = \eta_{m,n}\left(D^{2}f^{3}/g^{2},D^{3}f^{3}/g^{2},Dfh_{1}/g,Dfh_{2}/g\right) = \eta_{m,n}\left(f,g,h_{1},h_{2}\right) = \zeta_{m} + \zeta_{m}^{-1},
	$$ which implies that $z$ is $\zeta_{m}$ or $\zeta_{m}^{-1}$. Since $x\left(P_{m}\right) \in K\left(r\right)$, 
	we see that for any $\sigma$ in the absolute Galois group of $K\left(r\right)$, we have that $P_{m}^{\sigma} = \epsilon P_{m}$ for some $\epsilon \in \left\{\pm1\right\}$. Since we assume  the existence of an elliptic curve over $K$ whose torsion subgroup over $K$ contains $\bbz/m\bbz \times \bbz/n\bbz$, it follows that $\zeta_{m} \in K$.
	Hence, we have $$
		z = z^{\sigma} = \left(e_{m}\left(P_{m},Q_{m}\right)\right)^{\sigma} = e_{m}\left(P_{m}^{\sigma},Q_{m}^{\sigma}\right) = e_{m}\left(\epsilon P_{m},Q_{m}\right) = z^{\epsilon},
	$$ so we conclude $\epsilon = 1$ recalling that $m\ge 3$, and we have $P_{m} \in \scrE_{j}^{D}\left(K\left(r\right)\right)$. Since the Weil pairing $z$ of $P_{m}$ and $Q_{m}$ is a primitive $m$th root of unity, $P_{m}$ and $Q_{n}$ are linearly independent. By Lemma~\ref{lem:abgp}, there is a basis $\cB = \left\{P,Q\right\}$ of $\scrE_{j}^{D}\left[N\right]$ such that $\tfrac{N}{m}P = P_{m}$ and $\tfrac{N}{n}Q = Q_{n}$. We let $P',Q' \in \scrE_{j}$ be the images of $P,Q \in \scrE_{j}^{D}$, respectively, under the isomorphism from $\scrE_{j}^{D} \rightarrow \scrE_{j}$ defined by $\left(x,y\right) \mapsto \left(\frac{x}{D},\frac{y}{\sqrt{D^{3}}}\right) $. Then, $\cB' = \left\{P',Q'\right\}$  is a basis of $\scrE_{j}$, and we have \begin{align*}
		\rbar_{\cB'}\left(\Gbar'_{N}\right) & =\rbar_{\cB'}\left(\Gal\left(K\left(r\right)\left(x\left(\scrE_{j}[N]\right)\right) {\LARGE/} K\left(r\right)\right)\right)\\
		\notag & = \rbar_{\cB}\left(\Gal\left(K\left(r\right)\left(x\left(\scrE_{j}^{D}[N]\right)\right) {\LARGE/} K\left(r\right)\right) \right) \\
		\notag & \subseteq \Hbar_{N}\left(m,n\right).
	\end{align*}
Next,	Now we show that $\Hbar^{1}_{N}(m,n) \subseteq \rbar_{\cB'}\left(\Gbar'_{N}\right)$. Since $\gcd\left(f,g\right)=1$, $j\in \bbc(r)$ is transcendental over $\bbc$, and therefore,  \cite[Corollary~7.5.3]{DS} implies that \begin{equation*}
		\rbar_{\cB'}\left(\Gal(\bbc\left(j\right)\left(x\left(\scrE_{j}[N]\right)\right)\big/\bbc\left(j\right))\right)
		=\Hbar^{1}_{N}\left(1,1\right),
	\end{equation*}
	and 
	\begin{equation}\label{univ_Gal}
		\rbar_{\cB'}\left(\Gal\left(\bbc\left(j\right)\left(x\left(\scrE_{j}[N]\right)\right)\big/\bbc\left(j\right)\left(x\left(\tfrac{N}{m}P'\right),x\left(\tfrac{N}{n}Q'\right)\right)\right)\right)
		=\Hbar^{1}_{N}(m,n)
	\end{equation}
	by recalling the definition of $\rbar_{\cB'}$ over $\bbc\left(j\right)$ and that the image of $\rbar_{\cB'}$ is contained in $\SL{N}$.
	
	On the other hand, we have \begin{equation}\label{eqn:jx1x2}
		j = 1728 \frac{4f^{3}/g^{2}}{4f^{3}/g^{2}+27}, \quad x\left(\tfrac{N}{m}P'\right) = \frac{fh_{1}}{g}, \quad \text{ and } \quad x\left(\tfrac{N}{n}Q'\right) = \frac{fh_{2}}{g}.
	\end{equation} Therefore, we have the following diagram of subfields of  $\bbc\left(r\right)\left(x\left(\scrE_{j}[N]\right)\right)$ over $\bbc(j)$:
\begin{equation}\label{eqn:diagram}
\begin{tikzcd}[cramped, sep=small]
																												 & {\bbc\left(r\right)\left(x\left(\scrE_{j}[N]\right)\right)}																		&											  \\
																												 &																																				& \bbc\left(r\right) \arrow[lu, no head] \\
{\bbc\left(j\right)\left(x\left(\scrE_{j}[N]\right)\right)} \arrow[rd, no head] \arrow[ruu, no head] &																																				&											  \\
																												 & {\bbc\left(j\right)\left(x\left(\scrE_{j}[N]\right)\right) \cap \bbc\left(r\right)} \arrow[d, no head] \arrow[ruu, no head] &											  \\
																												 & {\bbc\left(j\right)\left(x\left(\tfrac{N}{m}P'\right),x\left(\tfrac{N}{n}Q'\right)\right)} \arrow[d, no head]	 &											  \\
																												 & \bbc\left(j\right)																													  &											 
\end{tikzcd}\end{equation}
Now, we show that \begin{equation}\label{eqn:fld_equal}
		\bbc(r)
		= \bbc\left(j\right)\left(x\left(\tfrac{N}{m}P'\right),x\left(\tfrac{N}{n}Q'\right)\right).
	\end{equation}
	The inclusion `$\supseteq$' has already been explained. We prove the other inclusion `$\subseteq$'.
The birational morphism $\left[f:g:h_{1}:h_{2}\right]: \bbp^{1} \dashrightarrow U_{m,n}$ over $K$ has a rational function as its inverse, $\tau: U_{m,n} \dashrightarrow \bbp^{1}$. By \eqref{eqn:jx1x2}, $$
		r = \tau\left(\left[f:g:h_{1}:h_{2}\right]\right) = \tau\left(\left[\frac{f^{3}}{g^{2}}:\frac{f^{3}}{g^{2}}:\frac{fh_{1}}{g}:\frac{fh_{2}}{g}\right]\right) \in \bbc\left(j\right)\left(x\left(\tfrac{N}{m}P'\right),x\left(\tfrac{N}{n}Q'\right)\right),
	$$ which implies that \eqref{eqn:fld_equal} holds.
	Also, considering the diagram~\eqref{eqn:diagram}, \eqref{eqn:fld_equal} implies that the  intermediate field $\bbc\left(j\right)\left(x\left(\scrE_{j}[N]\right)\right)~\cap~\bbc\left(r\right)$ must be equal to $ \bbc\left(j\right)\left(x\left(\tfrac{N}{m}P'\right),x\left(\tfrac{N}{n}Q'\right)\right)$. Therefore, we have the following:
	\begin{align*}
		\Gbar'_{N} = 
		\Gal\left(K\left(r\right)\left(x\left(\scrE_{j}[N]\right)\right) {\LARGE/} K\left(r\right)\right)
		 & \supseteq
		\Gal\left(\bbc\left(r\right)\left(x\left(\scrE_{j}[N]\right)\right) {\LARGE/} \bbc\left(r\right)\right) \\
		\notag & \cong
		\Gal\left(\bbc\left(j\right)\left(x\left(\scrE_{j}[N]\right)\right) {\LARGE/} \bbc\left(j\right)\left(x\left(\scrE_{j}[N]\right)\right) \cap \bbc\left(r\right)\right)\\
		\notag & =
		\Gal\left(\bbc\left(j\right)\left(x\left(\scrE_{j}[N]\right)\right) {\LARGE/} \bbc\left(j\right)\left(x\left(\tfrac{N}{m}P'\right),x\left(\tfrac{N}{n}Q'\right)\right)\right) \\
		\notag & = \rbar_{\cB'}^{-1}\left(\Hbar^{1}_{N}(m,n)\right),
	\end{align*}
	which implies that  $\Hbar^{1}_{N}(m,n) \subseteq \rbar_{\cB'}\left(\Gbar'_{N}\right)$.

	Moreover, by Lemma~\ref{lem:fld_theory}, \eqref{eqn:fld_equal}, \eqref{univ_Gal}, and Lemma~\ref{lem:gpidx} together with $\gcd\left(f,g\right) = 1$, the part~(b) can be verified as \begin{align*}
		\max\{3\deg f, 2\deg g\} = \left[\bbc(r):\bbc(j)\right] & = \left[\bbc\left(j\right)\left(x\left(\tfrac{N}{m}P'\right),x\left(\tfrac{N}{n}Q'\right)\right):\bbc(j)\right] \\
		\notag &= \left[\Hbar^{1}_{N}\left(1,1\right):\Hbar^{1}_{N}(m,n)\right]= \begin{cases}
			m\delta_{n},&\text{ if }n=2,\\
			\frac{1}{2}m\delta_{n},&\text{ if }n\ge 3.\\
		\end{cases}
	\end{align*}

	The part (c) has already been established in Proposition~\ref{prop:Umn_param}.
\end{proof}

Now, we  prove that considering only elliptic curves of $j \ne 0,1728$ is enough to prove the main result, Theorem~\ref{thm:main} as follows.

\begin{lemma}\label{lem:j01728}
	Let $n'\ge 2$ be an integer. Then, there exist finite subsets $\mathscr{A}_{n'},\mathscr{B}_{n'} \subseteq \overline{K}^{\times}$ such that for every positive divisor $m'$ of  $ n'$, \begin{align*}
		\left\{a\in K^{\times}: (a,0) \text{ satisfies }\cP(m',n')\right\} & \subseteq  \bigcup_{a \in \mathscr{A}_{n'} \cap K} a(K^{\times})^{2}, \text{ if } \left(m',n'\right) \ne \left(1,2\right) \text{ and }\\
		\left\{b\in K^{\times}: (0,b) \text{ satisfies }\cP(m',n')\right\} & \subseteq  \bigcup_{b \in \mathscr{B}_{n'} \cap K} b(K^{\times})^{3}, \text{ if } \left(m',n'\right) \ne \left(1,3\right).
	\end{align*}
\end{lemma}
\begin{proof}
	We prove the existence of $\mathscr{A}_{n'}$. Then, the existence of $\mathscr{B}_{n'}$ can be proved similarly.

	If $n'\ge 4$, we observe that the $n'$th primitive division polynomial $\Psi_{n',s,0}(x)\in \bbq[s][x]$  of $E_{s,0}$ with degree $\delta_{n'}/2$ is $F(x^{2},s)$ for some homogeneous polynomial $F(X,s)\in \bbq[X,s]$ which is not divisible by $X$ or $s$. This observation follows by the following facts:\begin{itemize}
		\item Neither $x$ nor $s$ divides $\Psi_{n',s,0}(x)$ (as in the proof of Proposition~\ref{prop:U1n}).
		\item The polynomial $\Psi_{n',s,0}(x)\in \bbq[s][x]=\bbq[x,s]$ is homogeneous when assigning weights to the variables $x$ and $s$ such that $\mathrm{wt}(x)=1$ and $\mathrm{wt}(s)=2$.
	\end{itemize}
	Since $X,s\nmid F(X,s)$, we have that $F(X,s)=\prod_{i=1}^{\delta_{n'}/4}(X-\alpha_{i}s)$ for some $\alpha_{i} \in \overline{K}^{\times}$. In other words, $\Psi_{n',s,0}(x)=\prod_{i=1}^{\delta_{n'}/4}(x^{2}-\alpha_{i}s)$, and any $(a,0)$ satisfying $\cP(m',n')$ must have  $a \in a_{*} \left(K^{\times}\right)^{2}$, where $a_{*}$ belongs to the finite set $\mathscr{A}_{n'} := \left\{\pm \frac{1}{\sqrt{\alpha_{i}}}: i=1,\cdots,\delta_{n'}/4\right\}$.

	If $n'=3$, by a direct computation, the 3rd primitive division polynomial $\Psi_{3,s,0}(x)\in \bbq[s][x]$ of degree $4$ of $E_{s,0}$ is $F(x^{2},s)$, where $F(X,s)\in \bbq[X,s]$ is a homogeneous polynomial that is not divisible by $X$ or $s$. As in the case $n'\geq 4$, we similarly obtain a finite set $\mathscr{A}_{3}$ in this case.

	If $n'=2$, then we have $(m',n')=(2,2)$. If for some  $a \in K^{\times}$, $E_{a,0}$  satisfies Condition~$\cP\left(2,2\right)$, meaning that the 2nd primitive division polynomial $x^{3} + ax$ of $E_{a,0}$ has a zero other than~$0$, then it follows that $a = -r^{2}$ for some $r \in K^{\times}$. Consequently, the set $\mathscr{A}_{2} := \left\{ -1 \right\}$ satisfies the statement.
	
\end{proof}

\subsection{Proof of the main theorem}\label{sec:main_proof}

Finally, we complete the proofs of our main results, Theorem~\ref{thm:main} and Corollary~\ref{cor:easy_cor}.

\begin{proof}[Proof of Theorem~\ref{thm:main}]
	If $(m,n)=\left(1,1\right)$, then Theorem~\ref{thm:main1} completes the proof.
	
	Suppose $(m,n)\in T_{g=0}$ given in~\eqref{eqn:genus0} with $(m,n)\neq \left(1,1\right)$. By Merel's theorem (\cite{Merel}), there exists a positive integer $N_{K}\ge 4$ such that ${E_{A,B}(K)}_{\tors}=E_{A,B}(K)[N_{K}]$ for all $\left(A,B\right)\in \scrU(K)$. Then, $n$ divides $N_{K}$ since there exists an elliptic curve $E/K$ such that ${E(K)}_{\tors} \supseteq \bbz/m\bbz \times \bbz/n\bbz$ by assumption. The existence of Merel's constant admits only finite number of pairs $\left(m',n'\right)$ of positive integers satisfying that $m \mid m'$, $n \mid n'$, $m'\mid n'$, $\left(m',n'\right) \ne \left(m,n\right)$, and there exists an elliptic curve $E/K$ over $K$ such that ${E\left(K\right)}_{\tors} \supseteq \bbz/m'\bbz \times \bbz/n'\bbz$. 
	
	Recalling Proposition~\ref{prop:ignoring_twisting}, it suffices to show that Proposition~\ref{prop:ignoring_twisting}\ref{Gal_stat}, which asserts that if $(m',n')\neq (m,n)$, then the function $G$ of positive real numbers $X$ defined by 
	\begin{align}\label{eqn:goal}
		\notag & G\left(X\right)  := \frac
		{\#\left\{\left(A,B\right) \in \scrU\left(K\right): \cH\left(A,B\right) \le X,~~ \left(A,B\right) \text{ satisfies }  \cP\left(m',n'\right) \right\}}
		{\#\left\{\left(A,B\right) \in \scrU\left(K\right): \cH\left(A,B\right) \le X,~~ \left(A,B\right) \text{ satisfies }  \cP\left(m,n\right) \right\}}\\
		& \text{converges to } 0 \text{ as } X \to \infty,
	\end{align} where the height function $\cH$ is introduced in the list of notations before Definition~\ref{defn:almostall}. Remark~\ref{rmk:psi_N}(d) implies that Condition~$\cP\left(m,n\right)$  is invariant under the quadratic twist, i.e., for any $D \in K^{\times}$ and $\left(A,B\right) \in \scrU\left(K\right)$, \begin{equation}\label{eqn:condP_not_vary_under_twist}
		D \cdot \left(A,B\right) \text{ satisfies } \cP\left(m,n\right) \text{ if and only if } \left(A,B\right) \text{ satisfies }\cP\left(m,n\right).
	\end{equation}
	Then, we have an invariant set $\cF$ under the quadratic twist as follows;$$
		\cF := \left\{\left(A,B\right) \in \scrU\left(K\right): \left(A,B\right) \text{ satisfies Condition } \cP\left(m,n\right) \right\}.
	$$ For any  subset $\cF'\subseteq \cF $ which is invariant under the quadratic twist, we call a  subset  $\fkR'$ of $\cF'$ a complete set of representatives of $\cF'$, if each $\left(A,B\right)\in \cF'$ can be  written in the form $\left(A,B\right)=(k^{2}A_{0},k^{3}B_{0})$, for some $k\in K^{\times}$ and a unique $(A_{0},B_{0})\in \fkR'$. 
	
	Now, we analyze  the function $G$ given in \eqref{eqn:goal} in terms of elements of a complete set of representatives of~$ \cF$.
	
	Let $\fkR$ be a complete set of representatives  of $ \cF$. For  $\left(A_{0},B_{0}\right) \in \fkR$ and a real number $X$  such that $X > \inf\left\{\cH\left(D^{2}A_{0},D^{3}B_{0}\right): D \in K^{\times}\right\}$, we define a function
	\begin{align*}
		G_{A_{0},B_{0}}(X)& =\frac
		{\#\left\{D\in K^{\times}:\cH(D^{2}A_{0},D^{3}B_{0})\le X,~~~  D\cdot \left(A_{0},B_{0}\right)\text{ satisfies } \cP(m',n') \right\}}
		{\#\left\{D\in K^{\times}: \cH(D^{2}A_{0},D^{3}B_{0})\le X,~~~ D\cdot \left(A_{0},B_{0}\right)\text{ satisfies } \cP(m,n) \right\}}.
	\end{align*} 
  The denominator and numerator of $G$ (as given in \eqref{eqn:goal}) are the sums of the denominators and numerators of $G_{A_{0},B_{0}}$, respectively, where $\left(A_{0},B_{0}\right)$ runs over $\fkR$. This function $G_{A_{0},B_{0}}$ provides insight into $G$ as follows: If $$
		X > \inf \left\{\cH\left(A,B\right) : \left(A,B\right) \in \cF\right\},
	$$then the set $$
		\fkR_{X} := \left\{\left(A_{0},B_{0}\right) \in \fkR: \cH\left(D^{2}A_{0},D^{3}B_{0}\right) \le X \text { for some } D \in K^{\times}\right\}
	$$ is not empty. Furthermore, $\fkR_{X}$ is finite, since the set $\left\{\left(A,B\right) \in \scrU\left(K\right): \cH\left(A,B\right) \le X \right\}$ is finite. For $\left(A_{0},B_{0}\right) \in \fkR_{X}$, we have $$
		\left\{D\in K^{\times}: \cH(D^{2}A_{0},D^{3}B_{0})\le X,~~~ D\cdot \left(A_{0},B_{0}\right)\text{ satisfies } \cP(m,n) \right\} \ne \emptyset,
	$$ and thus, \begin{align*}
		G(X)& =\frac
		{\sum_{\left(A_{0},B_{0}\right) \in \fkR_{X}}\#\left\{D\in K^{\times}:\cH(D^{2}A_{0},D^{3}B_{0})\le X,~~~  D\cdot \left(A_{0},B_{0}\right)\text{ satisfies } \cP(m',n') \right\}}
		{\sum_{\left(A_{0},B_{0}\right) \in \fkR_{X}}\#\left\{D\in K^{\times}: \cH(D^{2}A_{0},D^{3}B_{0})\le X,~~~ D\cdot \left(A_{0},B_{0}\right)\text{ satisfies } \cP(m,n) \right\}}.
	\end{align*} By \eqref{eqn:condP_not_vary_under_twist}, for $\left(A_{0},B_{0}\right) \in \fkR$, $G_{A_{0},B_{0}}$ is a constant function and its constant value is $$
		G_{A_{0},B_{0}}\left(X\right) = \begin{cases}
			0, & \text{ if } \left(A_{0},B_{0}\right) \text{ does not satisfy }\cP\left(m',n'\right),\\
			1, & \text{ if } \left(A_{0},B_{0}\right) \text{ satisfies }\cP\left(m',n'\right).
		\end{cases}
	$$ Therefore, it follows that \begin{equation}\label{eqn:G_refined}
		G\left(X\right) \le \frac{\#\left\{\left(A_{0},B_{0}\right) \in \fkR_{X}: \left(A_{0},B_{0}\right) \text{ satisfies } \cP\left(m',n'\right) \right\}}{\# \fkR_{X}}.
	\end{equation} To prove \eqref{eqn:goal}, let $$
		\cF^{*} := \left\{\left(A,B\right) \in \scrU^{*}\left(K\right): \left(A,B\right) = \left(f\left(r\right)u^{2},g\left(r\right)u^{3}\right) \text{ for } r,u \in K\right\},
	$$ where the polynomials $f,g\in K[r]$ are as given in Theorem~\ref{thm:min_Gal}. Then, we note that $\cF^{*}$ is a subset of $\cF$, and is invariant under the quadratic twist. We now carefully choose a complete set of representatives $\fkR^{*}$ of $\cF^{*}$ as follows: Let $J := \left\{1728\frac{4f^{3}}{4f^{3}+27g^{2}}\left(r\right): r \in K \setminus S \right\}$, where  $S := \left\{ r\in K: 1728\frac{4f^{3}}{4f^{3}+27g^{2}}\left(r\right) \in \left\{0,1728,\infty \right\}\right\}$ is a finite set. By Theorem~\ref{thm:min_Gal}(b), the rational morphism $1728\frac{4f^{3}}{4f^{3}+27g^{2}}: \bbp^{1} \dashrightarrow \bbp^{1}$ has degree $d := \begin{cases}m\delta_{n},&\text{ if }n=2\\\frac{1}{2}m\delta_{n},&\text{ if }n\ge3\\\end{cases}$. In other words, for each $j \in J$, the preimage $\left(1728\frac{4f^{3}}{4f^{3}+27g^{2}}\right)^{-1}\left(j\right)$ is non-empty and contains at most $d$ elements. 
	For each $j \in J$, we choose an element $\rho_{j} \in \left(1728\frac{4f^{3}}{4f^{3}+27g^{2}}\right)^{-1}\left(j\right)$ that minimizes the height, i.e., $\cH\left(f\left(\rho_{j}\right), g\left(\rho_{j}\right)\right) \le \cH\left(f\left(r\right), g\left(r\right)\right)$ for all $r \in \left(1728\frac{4f^{3}}{4f^{3}+27g^{2}}\right)^{-1}\left(j\right)$, and then we show that the set $\left\{\left(f\left(\rho_{j}\right), g\left(\rho_{j}\right)\right) \in \cF^{*}: j \in J\right\}$ can be chosen as $\fkR^{*}$, i.e., $$
		\fkR^{*} = \left\{\left(f\left(\rho_{j}\right), g\left(\rho_{j}\right)\right) \in \cF^{*}: j \in J\right\}.
	$$ Note that any element in $\cF^{*}$ can be written as $\left(f\left(r\right)u^{2},g\left(r\right)u^{3}\right)$ for some $r \in K \setminus S$ and $u \in K^{\times}$. For any $r \in K \setminus S$ and $u \in K^{\times}$, the pair $\left(f\left(r\right)u^{2},g\left(r\right)u^{3}\right)$ is a quadratic twist of $\left(f\left(\rho_{j}\right),g\left(\rho_{j}\right)\right) \in \fkR^{*}$, where $j := 1728 \frac{4f^{3}}{4f^{3}+27g^{2}}\left(r\right)$. Conversely, if $\left(f\left(r\right)u^{2},g\left(r\right)u^{3}\right)$ is a quadratic twist of $\left(f\left(\rho_{j}\right),g\left(\rho_{j}\right)\right)$ for some $j \in J$, then $j = 1728\frac{4f^{3}}{4f^{3}+27g^{2}}\left(\rho_{j}\right) = 1728\frac{4f^{3}}{4f^{3}+27g^{2}}\left(r\right)$. Summarizing, $\left\{\left(f\left(\rho_{j}\right), g\left(\rho_{j}\right)\right) \in \cF^{*}: j \in J\right\}$ forms a complete set  of representatives $\fkR^{*}$ of $\cF^{*}$.

	For a given positive real number $X$, let $$
		\fkR^{*}_{X} := \fkR_{X} \cap \fkR^{*},
	$$ and we define a  function $$
		F_{X}: \left\{r \in K \setminus S: \cH\left(f\left(r\right),g\left(r\right)\right) \le X\right\} \to \fkR^{*}_{X},
	$$ by $F_{X}\left(r\right) := \left(f\left(\rho_{j}\right), g\left(\rho_{j}\right)\right)$, where $j = 1728 \frac{4f^{3}}{4f^{3}+27g^{2}}\left(r\right)$. The condition defining the domain of $F_{X}$ ensures that $\cH\left(f\left(\rho_{j}\right), g\left(\rho_{j}\right)\right) \le \cH\left(f\left(r\right), g\left(r\right)\right) \le X$ and $\left(f\left(\rho_{j}\right), g\left(\rho_{j}\right)\right) \in \fkR^{*}_{X}$.

	Next, we show that \begin{equation}\label{eqn:FX}
		1 \le \# F_{X}^{-1}\left(\left\{\left(A,B\right)\right\}\right) \le d \text{ for each } \left(A,B\right) \in \fkR^{*}_{X}.
	\end{equation}
	Each $\left(A,B\right) \in \fkR^{*}_{X}$ satisfies $\left(A,B\right) = \left(f\left(\rho_{j}\right), g\left(\rho_{j}\right)\right)$ for $j \in J$, which implies $\rho_{j} \in F_{X}^{-1}\left(\left\{\left(A,B\right)\right\}\right)$. This guarantees that $1 \le \# F_{X}^{-1}\left(\left\{\left(A,B\right)\right\}\right)$. Now, suppose $r_{0} \in F_{X}^{-1}\left(\left\{\left(A,B\right)\right\}\right)$. Then, we have $$1728\frac{4f^{3}}{4f^{3}+27g^{2}}\left(r_{0}\right) = j_{A,B} := 1728\frac{4A^{3}}{4A^{3}+27B^{2}} \ne 0, 1728.$$ Thus, $r_{0}$ is a preimage of $j_{A,B}$ under the rational morphism $1728\frac{4f^{3}}{4f^{3}+27g^{2}}: \bbp^{1} \dashrightarrow \bbp^{1}$ of degree~$d$. Therefore, it follows that $\# F_{X}^{-1}\left(\left\{\left(A,B\right)\right\}\right) \le d$.
	
	Now, we let $\fkR_{0}$ be a complete set of representatives  of $\cF \setminus \cF^{*}$ (which is invariant under the quadratic twist) so that $\fkR = \fkR_{0} \cup \fkR^{*}$ is a complete set of representatives of $\cF$. We then partition $\fkR_{0}$ into three subsets as follows:
	\begin{align*}
		\fkR^{\operatorname{A}} & := \left\{\left(A_{0},B_{0}\right) \in \fkR_{0}: B_{0} = 0\right\},\\
		\fkR^{\operatorname{B}} & := \left\{\left(A_{0},B_{0}\right) \in \fkR_{0}: A_{0} = 0\right\}, \text{and} \\
		\fkR^{\operatorname{except}} & := \fkR_{0} \setminus \left(\fkR^{\operatorname{A}} \cup \fkR^{\operatorname{B}} \right).
	\end{align*}
	
	Since $\left(m,n\right) \ne \left(1,1\right), \left(m',n'\right)$, it follows that $\left(m',n'\right) \ne \left(1,2\right),\left(1,3\right)$. Then, by Lemma~\ref{lem:j01728}, two sets $$
		\mathscr{A} := \left\{\left(A_{0},0\right)\in \fkR^{\operatorname{A}}: \left(A_{0},0\right) \text{ satisfies }\cP\left(m',n'\right)\right\},
	$$  and $$
		\mathscr{B} := \left\{\left(0,B_{0}\right) \in \fkR^{\operatorname{B}}: \left(0,B_{0}\right) \text{ satisfies }\cP\left(m',n'\right)\right\}
	$$ are finite. Furthermore, by Theorem~\ref{thm:min_Gal}(c) and \eqref{eqn:condP_not_vary_under_twist}, the set $\fkR^{\operatorname{excep}}$ is also finite. Thus, the union $\mathscr{A} \cup \mathscr{B} \cup \fkR^{\operatorname{excep}}$ is finite. Let $X$ be a real number such that $$
		X > \inf \left\{\cH\left(A,B\right): \left(A,B\right) \in \cF^{*}\right\}.
	$$ Then, $\fkR^{*}_{X}$ is a non-empty finite subset of $ \fkR^{*}$, and referring to \eqref{eqn:G_refined}, we have:
	\begin{align*}
		G\left(X\right) & \le \frac{\#\left\{\left(A_{0},B_{0}\right) \in \fkR_{X}: \left(A_{0},B_{0}\right) \text{ satisfies } \cP\left(m',n'\right) \right\}}{\# \fkR^{*}_{X}}\\
		\notag & \le \frac{\#\left(\left(\mathscr{A} \cup \mathscr{B} \cup \fkR^{\operatorname{excep}}\right)\cap \fkR_{X}\right) + \#\left\{\left(A_{0},B_{0}\right) \in \fkR^{*}_{X}: \left(A_{0},B_{0}\right) \text{ satisfies } \cP\left(m',n'\right) \right\}}{\# \fkR^{*}_{X}}\\
		\notag & \le \frac{\#\left(\mathscr{A} \cup \mathscr{B} \cup \fkR^{\operatorname{excep}} \right) + \#\left\{\left(A_{0},B_{0}\right) \in \fkR^{*}_{X}: \left(A_{0},B_{0}\right) \text{ satisfies } \cP\left(m',n'\right) \right\}}{\# \fkR^{*}_{X}}.
	\end{align*}
	Since $\fkR^{*}$ is infinite and the union $\mathscr{A} \cup \mathscr{B} \cup \fkR^{\operatorname{excep}}$ is finite, we conclude that \begin{equation}\label{eqn:G_bd_1st}
		\lim_{X \to \infty} G\left(X\right) \le \lim_{X \to \infty} \frac{\#\left\{\left(A_{0},B_{0}\right) \in \fkR^{*}_{X}: \left(A_{0},B_{0}\right) \text{ satisfies } \cP\left(m',n'\right) \right\}}{\# \fkR^{*}_{X}}.
	\end{equation}
	Now, we prove \begin{equation}\label{eqn:R*X}
		\fkR^{*}_{X} = \left\{\left(A,B\right) \in \fkR^{*}: \cH\left(A,B\right) \le X\right\}, \text{ for } X \gg 0.
	\end{equation} By \cite[VIII.Theorem~5.6]{Silverman} and Theorem~\ref{thm:min_Gal}(b), the quantities $\frac{\cH\left(f\left(r\right),g\left(r\right)\right)}{H\left(r\right)^{d}}$ and $\frac{\cH\left(u^{2}f\left(r\right),u^{3}g\left(r\right)\right)}{H\left(r,u\right)^{d+6}}$ are bounded both below and above by positive constants, since the morphisms $\phi_{1}: \bba^{1} \to \bba^{2}$ defined by $\phi_{1}\left(r\right) = \left(f^{3}\left(r\right),g^{2}\left(r\right)\right)$, and $\phi_{2}: \bba^{2} \rightarrow \bba^{2}$ defined by $\phi_{2}\left(r, u\right) = \left(\left(u^{2}f\left(r\right)\right)^{3},\left(u^{3}g\left(r\right)\right)^{2}\right)$ have degree $d+6$. Therefore, for some positive constant~$C$, we have $$
		\frac{\cH\left(f\left(r\right),g\left(r\right)\right)}{\cH\left(f\left(r\right)u^{2},g\left(r\right)u^{3}\right)} \le C \frac{H\left(r\right)^{d}}{H\left(r,u\right)^{d+6}} \le C \frac{H\left(r\right)^{d}}{H\left(r\right)^{d+6}} = \frac{C}{H\left(r\right)^{6}},
	$$ which implies that there exists a constant $H_{\operatorname{bd}}$ such that \begin{equation}\label{eqn:large_Hr}
		\text{ if } H\left(r\right) > H_{\operatorname{bd}}, \text{then } \cH\left(f\left(r\right),g\left(r\right)\right) \le \cH\left(f\left(r\right)u^{2},g\left(r\right)u^{3}\right) \text{ for any } u \in K^{\times}.
	\end{equation}
	For a real number $X$ such that \begin{equation}\label{eqn:large_X}
		X > \max \left\{ \cH\left(f\left(r\right),g\left(r\right)\right): r\in K, H\left(r\right) \le H_{\operatorname{bd}} \right\},
	\end{equation} we have \begin{align}
		\label{eqn:fgr_large_r}&\left\{\left(f\left(r\right),g\left(r\right)\right) \in\scrU^{*}\left(K\right): r \in K, \cH\left(f\left(r\right)u^{2},g\left(r\right)u^{3}\right) \le X \text{ for some } u \in K^{\times} \right\} \\
		\notag& = \left\{\left(f\left(r\right),g\left(r\right)\right) \in \scrU^{*}\left(K\right): r \in K, H\left(r\right)\le H_{\operatorname{bd}}, \cH\left(f\left(r\right)u^{2},g\left(r\right)u^{3}\right) \le X \text{ for some } u \in K^{\times}\right\} \\
		\notag& \phantom{==}\cup \left\{\left(f\left(r\right),g\left(r\right)\right) \in \scrU^{*}\left(K\right): r \in K, H\left(r\right)> H_{\operatorname{bd}}, \cH\left(f\left(r\right)u^{2},g\left(r\right)u^{3}\right) \le X \text{ for some } u \in K^{\times} \right\}\\
		\notag& = \left\{\left(f\left(r\right),g\left(r\right)\right) \in \scrU^{*}\left(K\right): r \in K, H\left(r\right)\le H_{\operatorname{bd}}, \cH\left(f\left(r\right),g\left(r\right)\right) \le X \right\} \phantom{sssss} \text{(by \eqref{eqn:large_X})}\\
		\notag& \phantom{==}\cup \left\{\left(f\left(r\right),g\left(r\right)\right) \in \scrU^{*}\left(K\right): r \in K, H\left(r\right)> H_{\operatorname{bd}}, \cH\left(f\left(r\right),g\left(r\right)\right) \le X \right\} \phantom{ss} \text{(by taking } u=1\text{ and \eqref{eqn:large_Hr})}\\
		\notag& = \left\{\left(f\left(r\right),g\left(r\right)\right) \in \scrU^{*}\left(K\right): r \in K, \cH\left(f\left(r\right),g\left(r\right)\right) \le X \right\}.  
	\end{align}
	Consequently, \eqref{eqn:R*X} holds, since $\fkR^{*}_{X}$ is a subset of the set in \eqref{eqn:fgr_large_r}.

	Considering $F_{X}$, and using \eqref{eqn:FX} and\eqref{eqn:R*X}, we see that \eqref{eqn:G_bd_1st} implies that \begin{align}
		\lim_{X \to \infty} G\left(X\right) & \le \lim_{X \to \infty} \frac{ d \cdot \#\left\{r \in K \setminus S: \left(f\left(r\right),g\left(r\right)\right) \text{ satisfies } \cP\left(m',n'\right) \text{ and } \cH\left(f\left(r\right),g\left(r\right)\right) \le X\right\}}{\#\left\{r \in K \setminus S:  \cH\left(f\left(r\right),g\left(r\right)\right) \le X\right\}}.
	\end{align} Since there exist two positive constants $C_{1}$ and $C_{2}$ such that $C_{1} \le \frac{\cH\left(f\left(r\right),g\left(r\right)\right)}{H\left(r\right)^{d}} \le C_{2}$,  we obtain the inequality,
	\begin{align}\label{eqn:G_bd}
		\frac{1}{d} \lim_{X \to \infty} G\left(X\right) &
		\le \lim_{X \to \infty} \frac{\#\left\{r \in K \setminus S: \left(f\left(r\right),g\left(r\right)\right) \text{ satisfies } \cP\left(m',n'\right) \text{ and } H\left(r\right)^{d} \le C_{1}^{-1} X\right\}}{\#\left\{r \in K \setminus S:  H\left(r\right)^{d} \le C_{2}^{-1} X\right\}}.
	\end{align} 
	
	Then, it suffices to show that the right hand side of \eqref{eqn:G_bd} is $0$, to complete the proof of our goal \eqref{eqn:goal}. Since $$
		\Gal\left(K\left(r\right)\left(x\left(E^{m,n}_{r,1}\left[N_{K}\right]\right)\right) / K\left(r\right) \right) = \Gal\left(K\left(r,u\right)\left(x\left(E^{m,n}_{r,u}\left[N_{K}\right]\right)\right) / K\left(r,u\right) \right),
	$$ and $E_{f\left(r\right),g\left(r\right)} = E^{m,n}_{r,1}$, Theorem~\ref{thm:min_Gal}(a) and Theorem~\ref{thm:HIT} establish that almost all $r_{0} \in K \setminus S$ satisfy $$
		\Hbar^{1}_{N_{K}}(m,n) \subseteq \rbar_{\cB}\left(\Gal\left(K\left(x\left(E_{f\left(r_{0}\right),g\left(r_{0}\right)}\left[N_{K}\right]\right)\right) / K \right)\right) \subseteq \Hbar_{N_{K}}(m,n),
	$$ for some basis $\cB$ of $E_{f\left(r_{0}\right),g\left(r_{0}\right)}\left[N_{K}\right]$. Then, Lemma~\ref{lem:Gal_sol_a} implies that for almost all $r_{0} \in K \setminus S$, $E_{f\left(r_{0}\right),g\left(r_{0}\right)}$ does not satisfy $\cP\left(m',n'\right)$,  since $(m,n)\neq (m',n')$. In other words, $$
		\lim_{X \to \infty} \frac{\#\left\{r \in K \setminus S: \left(f\left(r\right),g\left(r\right)\right) \text{ satisfies } \cP\left(m',n'\right) \text{ and } H\left(r\right)^{d} \le C_{1}^{-1} X\right\}}{\#\left\{r \in K \setminus S:  H\left(r\right)^{d} \le C_{1}^{-1} X\right\}} = 0.
	$$ Furthermore, since \cite[\S2.5 Theorem(Schanuel)]{Serre97} shows that the value $$
		\frac{\#\left\{r \in K \setminus S:  H\left(r\right)^{d} \le C_{1}^{-1} X\right\}}{\#\left\{r \in K \setminus S:  H\left(r\right)^{d} \le C_{2}^{-1} X\right\}} \text{ is bounded,}
	$$ it follows that the right hand side of \eqref{eqn:G_bd} is $0$, completing the proof.
	
\end{proof}

\begin{proof}[Proof of Corollary~\ref{cor:easy_cor}]
	First, suppose that $\zeta_{m}\in K$. For each $(m,n) \in T_{g=0} \setminus \left\{(5,5)\right\}$, \cite{LMFDB} provides an elliptic curve $E/\bbq(\zeta_{m})$ with $j\ne0,1728$ such that $$
		{E(\bbq(\zeta_{m}))}_{\tors} \supseteq \bbz/ m \bbz \times \bbz / n \bbz.
	$$ Examples of such elliptic curves are provided in Table~\ref{table:exmaple} below. 
	\begin{table}[!h]
		\centering
		\begin{tabular}{|c||c|c|c|c|c|c|c|c|}
			\hline
			$\left(m,n\right)$ & $\left(1,7\right)$ & $\left(1,9\right)$ & $\left(1,10\right)$ & $\left(1,12\right)$ & $\left(2,6\right)$ & $\left(2,8\right)$ & $\left(3,6\right)$ & $\left(4,4\right)$ \\\hline
			elliptic curve & 26.b2 & 54.b2 & 66.c3 & 90.c7 & 30.a6 & 210.e6 & 196.2-a3 & 200.2-a3\\\hline
		\end{tabular}
		\captionsetup{justification=centering}
		\caption{\centering \label{table:exmaple}Examples of $E/\bbq(\zeta_{m})$ such that ${E(\bbq(\zeta_{m}))}_{\tors} \supseteq \bbz/ m \bbz \times \bbz / n \bbz$}
	\end{table}
	
	We note that although Table~\ref{table:exmaple} provides examples for only eight cases, it effectively covers all~$\left(m,n\right) \in T_{g=0} \setminus \left\{\left(5,5\right)\right\}$. For instance, while $\left(1,3\right) \in T_{g=0}$ is not explicitly listed in Table~\ref{table:exmaple}, the torsion part of the elliptic curve labeled 54.b2 (corresponding to the case $(1,9)$) contains~$\bbz/3\bbz$, thereby covering this case.
	
	For $(m,n)=(5,5)$, by \cite[Theorem~3.3]{J4}, there exists an elliptic curve $E/\bbq(\zeta_{5})$ such that ${E(\bbq(\zeta_{5}))}_{\tors} \supseteq \bbz/ 5 \bbz \times \bbz / 5 \bbz$, since the modular curve $X_{1}(5,5)$ is the moduli space of the elliptic curves whose Mordell-Weil groups of rational points contain $\bbz/5\bbz \times \bbz/5\bbz$.

	The converse follows directly from \cite[III.Corollary~8.1.1]{Silverman}.
\end{proof}


\begin{thebibliography}{9}

\bibitem[BN]{Bruin_Najman} Bruin, Peter and Najman, Filip, \textit{Counting elliptic curves with prescribed level structures over number fields},
J. Lond. Math. Soc. (2) \textbf{105} (2022), no. 4, 2415--2435.

\bibitem[DF]{Dummit-Foote} Dummit, David S. and Foote, Richard M., \textit{Abstract algebra, 3rd ed.}(John Wiley \& Sons, Inc., Hoboken, NJ 2004).


\bibitem[DS05]{DS} Diamond, Fred and Shurman, Jerry,
\textit{A first course in modular forms}(Springer-Verlag, New York, 2005).

\bibitem[DS04]{DS56} Derickx, Maarten and Sutherland, Andrew V.,
\textit{Torsion subgroups of elliptic curves over quintic and sextic number fields},
Proc. Amer. Math. Soc. \textbf{145} (2017), no. 10, 4233--4245.

\bibitem[Du97]{Duke} Duke, William,
\textit{Elliptic curves with no exceptional primes},
C. R. Acad. Sci. Paris S{\' e}r. I Math. \textbf{325} (1997), no. 8, 813--818.

\bibitem[F83]{Faltings}
Faltings, Gerd, \textit{Endlichkeitss{\" a}tze f{\" u}r abelsche Variet{\" a}ten {\" u}ber Zahlk{\" o}rpern. (German) [Finiteness theorems for abelian varieties over number fields]}, Invent. Math. 73 (1983), no. 3, 349--366.

\bibitem[Gu21]{Gu21}
Gu{\v z}vi{\' c}, Tomislav,
\textit{Torsion growth of rational elliptic curves in sextic number fields},
J. Number Theory 220 (2021), 330--345.

\bibitem[HS17]{Harron_Snowden} Harron, Robert and Snowden, Andrew,
\textit{Counting elliptic curves with prescribed torsion},
J. Reine Angew. Math. \textbf{729} (2017), 151--170.

\bibitem[JKS04]{J3} Jeon, Daeyeol, Kim, Chang Heon and Schweizer, Andreas,
\textit{On the torsion of elliptic curves over cubic number fields},
Acta Arith. \textbf{113} (2004), no. 3, 291--301. 

\bibitem[JKP06]{J4} Jeon, Daeyeol, Kim, Chang Heon and Park, Euisung,
\textit{On the torsion of elliptic curves over quartic number fields},
J. London Math. Soc. (2) \textbf{74} (2006), no. 1, 1--12. 

\bibitem[J08]{J5} Jeon, Daeyeol, \textit{Tetragonal modular curves $X_{1}(M,N)$}, Commun. Korean Math. Soc. \textbf{23} (2008), no. 3, 343--348

\bibitem[K91]{Kamienny} Kamienny, Sheldon,
\textit{Torsion points on elliptic curves and $q$-coefficients of modular forms},
Invent. Math. \textbf{109} (1992) 221--229.

\bibitem[KM88]{Kenku_Momose} Kenku, M. A. and Momose, Fumiyuki,
\textit{Torsion points on elliptic curves defined over quadratic fields},
Nagoya Math. J. \textbf{109} (1988), 125--149.

\bibitem[LMFDB]{LMFDB} The LMFDB Collaboration, \textit{The $L$-functions and modular forms database}, {\it Varieties: Elliptic curves over~$\bbq(\alpha)$}, http://www.lmfdb.org, 2022, [Online; accessed 27 September 2022].

\bibitem[Ma78]{Mazur} Mazur, Barry,
\textit{Modular curves and the Eisenstein ideal. With an appendix by Mazur and M. Rapoport}
Inst. Hautes {\' E}tudes Sci. Publ. Math. No. \textbf{47} (1977), 33--186 (1978).

\bibitem[Me96]{Merel} Merel, Lo{\" i}c,
\textit{Bornes pour la torsion des courbes elliptiques sur les corps de nombres. (French) [Bounds for the torsion of elliptic curves over number fields]}
Invent. Math. \textbf{124} (1996), no. 1-3, 437--449.

\bibitem[Na10]{Najman2} Najman, Filip, \textit{Complete classification of torsion of elliptic curves over quadratic cyclotomic fields}, J. Number Theory,
Volume \textbf{130}, Issue 9 (2010),1964--1968.

\bibitem[Na11]{Najman} Najman, Filip,
\textit{Torsion of elliptic curves over quadratic cyclotomic fields},
Math. J. Okayama Univ. \textbf{53} (2011), 75--82.


\bibitem[Se97]{Serre97} Serre, Jean-Pierre,
\textit{Lectures on the Mordell-Weil theorem. Translated from the French and edited by Martin Brown from notes by Michel Waldschmidt. With a foreword by Brown and Serre. 3rd ed.}
(Vieweg \& Sohn, Braunschweig, 1997).

\bibitem[Se08]{Serre08} Serre, Jean-Pierre,
\textit{Topics in Galois theory. Second edition. With notes by Henri Darmon. Research Notes in Mathematics, 1. A K Peters, Ltd.}(Wellesley, MAm 2008).

\bibitem[Si09]{Silverman} Silverman, Joseph H.,
\textit{The arithmetic of elliptic curves, 2nd ed.}(Springer, Dordrecht, 2009).

\bibitem[DR]{DR} Rapoport, M. and Deligne, P.
\textit{Les schémas de modules de courbes elliptiques.(French)Modular functions of one variable, II}
(Proc. Internat. Summer School, Univ. Antwerp, Antwerp, 1972), pp. 143--316

\bibitem[Tr20]{Antonela} Trbovi{\' c}, Antonela,
\textit{Torsion groups of elliptic curves over quadratic fields $\bbq(\sqrt{d})$, $0<d<100$},
Acta Arith. \textbf{192} (2020), no. 2, 141--153.

\bibitem[Zh]{Zhao} Zhao, Alan, \textit{The frequency of elliptic curves over $\bbq[i]$ with fixed torsion}, preprint 2020, \href{https://arxiv.org/abs/2009.08998}{arxiv:2009.08998}

\bibitem[Zy]{Zywina} Zywina, David, \textit{Elliptic curves with maximal Galois action on their torsion points},
Bull. Lond. Math. Soc. \textbf{42} (2010), no. 5, 811--826.


\bibitem[SE]{StackExchnge} \textit{Mathematics Stack Exchange}. https://math.stackexchange.com/questions/4382512

\end{thebibliography}
\end{document}